\newcommand{\R}{{\Bbb R}}
\newcommand{\N}{{\Bbb N}}
\newcommand{\Q}{{\Bbb Q}}
\newcommand{\rinf}{\rightarrow \infty}
\newcommand{\dminus}{\mbox{$\;^\cdot\!\!\!-$}}
\def\doi{7 (2:11) 2011}
\begin{document}
\title{Banach spaces as data types}
\author[D.~Normann]{Dag Normann}
\address{Department of Mathematics, The University 
of Oslo, P.O. Box 1053, Blindern N-0316 Oslo, Norway}  
\email{dnormann@math.uio.no}
\keywords{Banach spaces, computability, internal algorithms}
\subjclass{F.1.1, F.4.1, F.3.2}

\begin{abstract}We introduce the operators {\em modified limit} and
{\em accumulation} on a Banach space, and we use this to define what
we mean by being internally computable over the space. We prove that
any externally computable function from a computable metric space to a
computable Banach space is internally computable.

We motivate the need for internal concepts of computability by
observing that the complexity of the set of finite sets of closed
balls with a nonempty intersection is not uniformly hyperarithmetical,
and thus that approximating an externally computable function is
highly complex.
\end{abstract}

\maketitle

\section{Introduction}\label{Sec1}This note may be viewed as a sequel 
to what we once called ``the trilogy" Normann \cite{D1,D2,D3}. There
we were interested in typed structures of continuous functions over
complete separable metric spaces. We used the, for our purpose
equivalent, cartesian closed categories of ${\bf QCB}_0$ (\cite{BSS})
and Kuratowski Limit Spaces (\cite{Ku}).

In computational analysis it is customary to work with
TTE-representations, see, e.g., Weihrauch~\cite{We}, or alternatively with
domain representations. The main observation in \cite{D1} was that if
we restrict ourselves to the limit space representation and ignore
TTE-representations or domain representations, we obtain more
informative results with simpler proofs. In \cite{D2} we discussed the
distinction between internal and external approaches to computability
over a structured set.  The internally computable functions are
defined from elements, relations and functions present in the
structure, using acceptable operators that form new functions. The
problem will be to decide what the acceptable operators are. The
external concepts will be based on an already established definition
of computability on a set of representatives. In our view, internal
approaches are worthwhile pursuing. The advantage with an internal
approach is on the one hand that one does not have to translate
everything to the set of representatives, and on the other hand that
an internally defined object will always be well defined. When there
is a natural concept of external computability around, every
internally computable function should also be externally computable,
this is a soundness criterion. We refer to \cite{D2} for a further
discussion and for references.

In Section \ref{Sec2} we will prove that for computable Banach spaces in general it is very hard to decide when three closed balls  have a nonempty intersection. Our precise statement is that the problem of when three closed balls have a nonempty intersection is not uniformly hyperarithmetical. We will use a computable version of a theorem due to R. Kaufman \cite{Kauf}, combined with a trick due to Alfsen and Effros \cite{AE}, in order to establish this.  The conclusion we draw is that when $X$ is a computable metric space and $M$ is a computable Banach space, it is not hyperarithmetically decidable in general when a given finite approximation to a function $F:\N^{\N} \rightarrow \N^{\N}$ also approximates a representation of a computable function from $X$ to $M$. We see this as a support for introducing internal concepts of computability in analysis.

\begin{rem} The hyperarithmetical hierarchy, introduced 
by Kleene, is a transfinite extension of the arithmetical hierarchy,
see Rogers \cite{Rogers} or Sacks \cite{Sacks} for an
introduction. The hyperarithmetical subsets of $\N^\N$ will be exactly
the $\Delta^1_1$-sets, and they may be considered to be the Borel sets
where the construction can be coded by a computable, well founded
tree.
\end{rem}

When a separable Banach space $M$ is a closed subset of a metric space
$X$, then $M$ is actually a retract of $X$. In Section \ref{Sec3} we
will prove this, consider the proof as a case study and discuss on the
basis of the proof why we might consider the retract as internally
computable from the data at hand.  The internal principle we will use
can be seen as a combination of primitive recursion and forming
effective limits. 

 Retracts at base types induce retracts at
higher types. We use this to observe the effective version of the
embedding theorem from \cite{D3}: Let $U$ be the universal separable
metric space introduced by Urysohn in \cite{U1,U2}. If $\sigma(\vec
X)$ is a type in base type variables $\vec X$, and $\vec M$ is a
corresponding sequence of computable Banach spaces, then there is a
computable topological embedding of $\sigma(\vec M)$ into $\sigma(\vec
U)$. We will give a brief introduction to $U$ in Section \ref {Sec3}.

At the end of Section \ref{Sec3} we will discuss another scheme for
internal algorithms, inspired by the $\mu$-operator, and we will show
that every externally computable function from a computable metric
space to a computable Banach space will be internally computable in
this extended sense.

\section{When do three closed balls in a Banach space have a nonempty intersection? }\label{Sec2}
\subsection{Discussion}
In  this section we will consider Banach spaces that are  completions
of countable normed vector spaces over $\Q$, and the motivating
problem is:\medskip

\begin{center}{\em Given three closed balls with known radiuses and centers, how hard is it to decide if they have a point in common?}\end{center}\medskip

\noindent We will make the question precise, and show that the problem is of analytic, but not of Borel, complexity. An effective version will be that it is $\Sigma^1_1$, but not hyperarithmetical.
In the proof we have used an elaborated  version of a construction due to Alfsen and Effros \cite{AE}, see Lemma \ref{AE}. 

A computable metric space is given as the completion of a computable metric $d$ on an enumerated set $\{x_n\mid n \in \N\}$. An object in a computable metric space is represented by functions $\gamma:\N \rightarrow \N$ such that $$ d(x_{\gamma(n)},x_{\gamma(n+1)}) \leq 2^{-n}$$
for all $n \in \N$.  We say that the sequence represented by $\gamma$ is {\em fast converging}. We will use $\gamma$, $\xi$ etc. for {\em representing functions}. A function $f$ from one computable metric space $X$ to another space $Y$ is {\em externally computable} if there is a computable partial functional $ F:\N^{\N }\rightarrow \N^{\N}$ that will be total on the set of functions representing elements in $X$, and will map a function representing $x \in X$ to a function representing  $f(x) \in Y$.
A {\em computable Banach space} is primarily given as a computable metric space with computable $+$ and multiplication with scalars, respecting that the metric is given by a norm. 

These definitions are standard in the TTE-approach to computational analysis, and the concepts are extensional ones. 

Kaufman \cite{Kauf} showed that whenever $M$ is a nonreflexive, separable Banach space, then there is an equivalent norm on $M$ such that the set of bounded linear functionals attaining this norm is complete $\Sigma^1_1$ in the weak star topology. As stated, this is stronger than our Theorem \ref{BO}, and it can be used to prove that the collection of triples of closed balls having a nonempty intersection is complete analytical.
We give an alternative proof for two reasons\medskip
\begin{enumerate}[$-$]
\item We need an effective version of the theorem for our
applications and we find it simpler to give a direct proof for the
special case we need than to analyze the computational content of
Kaufman's proof, even for the special cases given in his
paper.\medskip

\item Our argument is quite different in flavor than that of Kaufman, and we think that the method of proof is of interest in itself. In a sense, our argument is more elementary, but then it leads to a definitely weaker result.
\end{enumerate}\medskip

\noindent In this paper, all Banach spaces are real, and all linear functionals are real. Following the tradition in set theory and theoretical computer science, we let the natural numbers $\N$ start with $0$.
\subsection{Closed Balls}
The problem under investigation is the following: Given a real Banach space $M$ via a norm on a dense, countable subspace $N$, where $N$ is a vector space over $\Q$, and given $n$ closed balls $B_1, \ldots,B_n$ with centers in $N$ and radiuses in $\Q$, how hard is it to tell if the intersection $$B_1 \cap \cdots \cap B_n$$ is empty or not.
If $n = 1$ or $n=2$, this is trivial. We will show that we obtain the full complexity for $n = 3$.
Clearly the statement is analytic, or $\Sigma^1_1$, since it is a simple statement of the existence of a point satisfying an arithmetical requirement. We will show that it is not Borel, by showing that every Borel subset of $\N^\N$ can be continuously reduced to this statement.

In  \cite{AE}, Alfsen and Effros gave an example of a Banach space $M$ with a subspace $M_0$  and three closed balls $B_1$, $B_2$ and $B_3$ with radius 1 and centers in $M_0$ such that these balls do not intersect in $M_0$ but in $M$. Our first lemma is based on that example.
\begin{lem}\label{AE}
Let $M$ be a Banach space with norm $|| \cdot ||_M$ and let $F:M \rightarrow \R$ be a linear functional with norm 1.
We let $M_F$ be $\R^2 \times M$ with the following norm:
$$||(x,y,z)|| = \max\{|x|,|y|, ||z||_M , |x + y + F(z)|\}.$$
Let $B_1$, $B_2$ and $B_3$ be the balls in $M_F$ with radius 1 and centers in $(2,0,0_M)$, $(0,2,0_M)$ and $(2,2,0_M)$ respectively. 
Then the following are equivalent:
\begin{enumerate}[\em(1)]
\item $B_1 \cap B_2 \cap B_3 \neq \emptyset$.
\item For some $z$ in the unit sphere of $M$ we have that $|F(z)| = 1$.
\end{enumerate}
\end{lem}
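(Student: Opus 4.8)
The plan is to prove both implications by a direct computation with the max-norm on $M_F$, the whole content being careful bookkeeping of which of the four quantities inside the maximum is active.

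For the direction $(1) \Rightarrow (2)$, I would start from a point $p = (x,y,z) \in B_1 \cap B_2 \cap B_3$ and unwind $\|p - c_i\| \le 1$ for the three centers $c_1 = (2,0,0_M)$, $c_2 = (0,2,0_M)$, $c_3 = (2,2,0_M)$. The first coordinate gives $|x-2| \le 1$ (from $B_1$ and $B_3$) together with $|x| \le 1$ (from $B_2$), forcing $x = 1$; symmetrically $y = 1$. The $\|\cdot\|_M$-coordinate gives $\|z\|_M \le 1$ from every ball. Substituting $x = y = 1$, the last coordinate of the norm evaluated at $p - c_i$ becomes $|(x-2) + y + F(z)| = |F(z)|$ for $B_1$, likewise $|F(z)|$ for $B_2$, and $|(x-2)+(y-2)+F(z)| = |F(z) - 2|$ for $B_3$. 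Hence $|F(z)| \le 1$ and $|F(z) - 2| \le 1$; the latter yields $F(z) \ge 1$, so $F(z) = 1$. Since $\|F\| = 1$ we get $1 = F(z) \le \|z\|_M \le 1$, i.e.\ $\|z\|_M = 1$, so $z$ is on the unit sphere of $M$ with $|F(z)| = 1$.

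For $(2) \Rightarrow (1)$, given such a $z$ I would replace it by $-z$ if necessary so that $F(z) = 1$, and then simply exhibit the witness $p = (1,1,z)$. Here $p - c_1 = (-1,1,z)$, $p - c_2 = (1,-1,z)$, $p - c_3 = (-1,-1,z)$, and in each case the four quantities entering the maximum are $1$, $1$, $\|z\|_M = 1$, and $|\pm 1 \pm 1 + F(z)|$, which equals $|F(z)| = 1$ in the first two cases and $|F(z)-2| = 1$ in the third. Thus $\|p - c_i\| = 1$ for $i = 1,2,3$ and $p$ lies in the intersection.

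I do not expect a genuine obstacle: the only care needed is tracking the signs in the last summand of the max-norm and remembering that it is $\|F\| = 1$ that upgrades $F(z) = 1$ to $\|z\|_M = 1$. The conceptual point, which is the Alfsen--Effros idea being adapted here, is that the extra term $|x + y + F(z)|$ in the norm is engineered precisely so that the three balls are forced to meet exactly when $F$ attains its norm on the unit sphere of $M$; this is what makes the lemma a useful reduction target for the complexity argument in the rest of Section~\ref{Sec2}.
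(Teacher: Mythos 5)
Your proof is correct and follows essentially the same route as the paper: pin down $x=y=1$ and $\|z\|_M\le 1$ from the max-norm, use the ball around $(2,2,0_M)$ to force $F(z)\ge 1$ and hence (via $\|F\|=1$) $F(z)=\|z\|_M=1$, and exhibit $(1,1,z)$ as the witness for the converse. Your explicit handling of the sign (replacing $z$ by $-z$ so that $F(z)=1$) is a small point the paper leaves implicit, but the argument is the same.
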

\begin{proof}
First, let $(x,y,z) \in B_1 \cap B_2 \cap B_3$. Then $x = y = 1$ and $||z||_M \leq 1$.
Considering $$||(2,2,0_M) - (1,1,z)|| = ||(1,1,-z)|| \leq 1$$
we see that  we also must have that $|2 - F(z)| \leq 1$, and the only possibility then is that $||z||_M = 1$ and $F(z) = 1$. This proves 1. $\Rightarrow$ 2.
Conversely, we see by direct calculation that if $x = y = ||z||_M = F(z) = 1$, then $(1,1,z)$ is in the intersection.
\end{proof}
\subsection{Linear functionals}
In the previous section, we did relate the intersection property to a property of linear functionals.
\begin{defi}Let $M$ be a Banach space and let $F:M \rightarrow \R$ be a linear functional with norm 1.
We say that $F$ {\em attains its norm} if for some $z$ on the unit
sphere of $M$, $F(z) = 1$.\end{defi}

\subsubsection{Coding quantifiers and truth values}
Now, let $M_n$ be a Banach space for each $n$, and  let $F_n$ be a linear functional on $M_n$ with norm 1. Let $|| \cdot ||_n$ be the norm on $M_n$.
We will construct two spaces $M_\exists$ and $M_\forall$ with corresponding norms and linear functionals $F_\exists$ and $F_\forall$, and the point is that 
\begin{enumerate}[$-$]
\item $F_\exists$ will attain its norm if and only if $F_n$ attains its norm for some $n$,
\item $F_\forall$ will attain its norm if and only if $F_n$ attains its norm for all $n$.
\end{enumerate}
\paragraph{\em The existential quantifier}
We let $M_\exists$ consist of all functions $$f \in \prod_{n \in \N} M_n$$ such that
$$\sum_{n \in \N} ||f(n)||_n < \infty.$$
We let 
$$||f||_\exists = \sum_{n \in \N} ||f(n)||_n$$
and we let 
$$F_\exists(f) = \sum_{n \in \N}F_n(f(n)).$$
If $x$ is in the unit sphere of $M_n$ such that $F_n(x) = 1$, we let $f(n) = x$ and $f(m) = 0_{M_m}$ for $m \neq n$.  Then $f$ is in the unit sphere of $M$ and $F_\exists(f) = 1$. Thus if $F_n$ attains its norm for at least one $n$, then $F_\exists$ will do so.

Conversely, if $f$ is in the unit sphere of $M_\exists$, and $F_\exists(f) = 1$, then we must have that $f(n)$ is nonzero for at least one $n$ and that $F_n(f(n)) = ||f(n)||_n$ for all $n$, since otherwise we will have a proper inequality $F_\exists(f) < ||f||_\exists$.
It is easy to see that the norm of $F_\exists$ must be 1.
\paragraph{\em The universal quantifier}
We let $M_\forall$ consist of all functions $$f \in \prod_{n \in \N}M_n$$ such that $$\sum_{n \in \N}2^{-n}||f(n)||_n < \infty.$$
Let $$||f||_\forall = \sum_{n \in \N}2^{-(n+1)}\max\{||f(n)||_n,||f(n+1)||_{n+1}\}$$
and we let $$F_\forall(f) = \sum_{n \in \N}2^{-(n+2)}(F_n(f(n)) + F_{n+1}(f(n+1)).$$
It is easy to see that the norm is well defined, that $F_\forall$ is well defined and that it has norm 1.

Now, assume that $x_n$ is in the unit sphere of $M_n$ and is such that $F_n(x_n) = 1$ for each $n \in \N$. Let $f(n) = x_n$. Then $||f||_\forall = 1$ and $F_\forall(f) = 1$, so $f$ will witness that $F_\forall$ will attain its norm.

For the converse, assume that for some $f$ in the unit sphere of $M_\forall$ we have $F_\forall(f) = 1$.
Then we in particular must have for all $n \in \N$ that
$$\max\{||f(n)||_n,||f(n+1)||_{n+1}\} = \frac{1}{2}(F_n(f(n))+F_{n+1}(f(n+1))),$$ 
since if this is not the case, we will have the proper inequality $$|F_\forall(f)| < ||f||_\forall.$$ This implies that $$||f(n)||_n = ||f(n+1)||_{n+1} = F_n(f(n)) = F_{n+1}(f(n+1))$$ for all $n$, and since $||f||_\forall = 1$, $1$ must be the common value.
It follows that $F_n$ attains its norm for all $n \in \N$.

We will also need two base Banach spaces, one for each of the truth values $\top$ and $\bot$:
\paragraph{\em The positive base}
Let $M_{\top}$ be the set of functions $f:\N \rightarrow \R$ such that $\sum_{n \in \N}|f(n)| < \infty$, let $||f||_{\top} = \sum_{n \in \N} |f(n)|$ and let $F_{\top}(f) = \sum_{n \in \N}f(n)$.
Then $F_\top$ has norm 1 and attains its norm. 

\paragraph{\em The negative base}
Let $M_{\bot}$ be the set of functions $f:\N \rightarrow \R$ such that $\lim_{n \rightarrow \infty}f(n) = 0$.
Let $||f||_{\bot} = \max\{|f(n)| \mid n \in \N\}$ and let $F_{\bot}(f) = \sum_{n \in \N}2^{-(n+1)}f(n)$.
Then $F_{\bot}$ has norm 1, but will not attain its norm.
 
\subsubsection{The Polish space of norms and functionals}
Throughout the rest of this section we will let $N$ be the vector-space over $\Q$ consisting of all functions from $\N$ to $\Q$ with finite support. We identify $N$ with the countable set of finite sequences $$(q_0, \ldots , q_{n-1})$$ from $\Q$ such that either $n = 0$ (the given sequence is empty) or $q_{n-1} \neq 0$. We let $\Theta$ be the zero-element of $N$, and depending on how we view $N$, $\Theta$ is either the constant zero function $0^\omega$ or the empty sequence.

We will restrict ourselves to norms $||\cdot||$ on $N$ with values in $\Q$,  and to linear functionals $F$ on $N$ with values in $\Q$. We may also identify $N$ with the set of rational linear combinations of free base vectors $v_0,v_1,v_2, \cdots.$ Then $\Theta$ will be the empty linear combination.

 Then the set $X$ of pairs $x = (||\cdot||_x,F_x)$, where $||\cdot||_x$ is a norm on $N$ and $F_x$ is a linear functional with norm bounded by 1, is a closed subset of $(N \rightarrow \Q)^2$ with the product topology, and is thus a Polish space. When we want more flexibility, we may consider all real valued norms on $N$, and the set of norms is still a Polish space. In the proof of Corollary \ref{2.10} we use a norm where the values on $N$ are algebraic numbers.

Given a norm $||\cdot||$ on $N$, the Banach space $M$ under consideration will be the completion of $N$ with respect to this norm, and the problem is then if (the canonical extension of) $F$ takes the value 1 somewhere on the unit sphere. Actually we will be dealing only with functionals with norm 1, and then the problem is if $F$ attains its norm. We let $Y$ be the set of $(||\cdot||,F) \in X$ where $F$ has norm 1 and attains its norm. It is the complexity of the set $Y$ that is under investigation in this section.

We will now  relate this way of representing separable Banach spaces and linear functionals to the constructions of the previous subsection.
\begin{obs} {\em The norms $ || \cdot ||_\top$  and $|| \cdot ||_\bot$, restricted to $N$, will take values in $\Q$. $M_\top$ and $M_\bot$ will, up to isomorphism of Banach spaces, be the completions of $N$ under these norms. }
\end{obs}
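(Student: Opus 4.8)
The plan is to verify three facts and then invoke the uniqueness of metric completions: that $N$ embeds isometrically into each of $M_\top$ and $M_\bot$ with norm taking only rational values on $N$; that $N$ is dense in each; and that $M_\top$ and $M_\bot$ are complete. For the first point, every $f \in N$ has finite support and rational values, so $\sum_n |f(n)| < \infty$ and $\lim_{n \rinf} f(n) = 0$; hence $N \subseteq M_\top$ and $N \subseteq M_\bot$, and the inclusions are linear and preserve the respective norms. For $f \in N$ the sum $||f||_\top = \sum_n |f(n)|$ has only finitely many nonzero terms, each rational, so $||f||_\top \in \Q$; and $||f||_\bot = \max_n |f(n)|$ is the maximum of the finitely many rationals $|f(n)|$ together with $0$, so $||f||_\bot \in \Q$. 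The same observation applies to $F_\top$ and $F_\bot$ restricted to $N$, should that be needed elsewhere. This gives the first assertion.

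Next, density. Given $f \in M_\top$ and $\varepsilon > 0$, I would choose $k$ with $\sum_{n > k} |f(n)| < \varepsilon/2$ and then rationals $q_0, \dots, q_k$ with $\sum_{n \le k} |f(n) - q_n| < \varepsilon/2$; the finitely supported rational sequence $g$ with $g(n) = q_n$ for $n \le k$ and $g(n) = 0$ otherwise lies in $N$ and satisfies $||f - g||_\top < \varepsilon$. Given $f \in M_\bot$ and $\varepsilon > 0$, I would choose $k$ with $|f(n)| < \varepsilon/2$ for all $n > k$ and rationals $q_0, \dots, q_k$ with $|f(n) - q_n| < \varepsilon/2$ for $n \le k$; the corresponding $g \in N$ then satisfies $||f - g||_\bot = \max\{\,\max_{n \le k} |f(n) - q_n|,\ \sup_{n > k} |f(n)|\,\} < \varepsilon$. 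So $N$ is dense in $M_\top$ and in $M_\bot$.

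Finally, $M_\top$ and $M_\bot$ are, concretely, the classical spaces $\ell^1(\N)$ and $c_0(\N)$, and their completeness may be taken as standard or checked directly (an $\ell^1$-Cauchy sequence converges coordinatewise to a sequence one verifies is absolutely summable with the right limit; a $c_0$-Cauchy sequence converges uniformly, and a uniform limit of null sequences is a null sequence). A complete normed space that contains a normed space as a dense linear subspace via an isometric embedding is, by the uniqueness of completions up to isometric isomorphism, that completion. Applying this to $(N, ||\cdot||_\top) \subseteq M_\top$ and $(N, ||\cdot||_\bot) \subseteq M_\bot$ yields the second assertion, in fact with isometric (not merely linear-homeomorphic) isomorphisms.

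I do not expect a genuine obstacle here; everything is a routine verification. The only point that needs slight care is the density argument for $M_\bot$, where one controls a supremum rather than a sum, so the cut-off $k$ must be chosen so that the tail of the sequence is already within $\varepsilon/2$ of zero before the rational approximation of the head is carried out.
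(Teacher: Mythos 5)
Your proof is correct; the paper states this as an observation without proof, and your verification (rational norm values on finitely supported rational sequences, density of $N$ in $\ell^1$ and $c_0$, completeness of those spaces, and uniqueness of completions) is exactly the routine argument the author leaves to the reader. No gaps.
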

The constructions of $M_\exists$ and $M_\forall$ can be viewed as operators that for all infinite sequences $\{(M_n,F_n)\}_{n \in \N}$  produce the pairs $(M_\exists,F_\exists)$ and $(M_\forall,F_\forall)$.

We may then restrict these operators to pairs from $X$, and further only consider functions with finite support in the construction of the new vector spaces. Then the results will be the vector space over $\Q$ of functions from $\N^2$ to $\Q$ with finite support and with the restrictions to this space of the norms and linear functionals. Since $\N^2$ can be put in a 1-1-correspondence with $\N$ we may consider these operators as operators from $X^\N$ to $X$. In order to make our constructions uniform, we will use 
a fixed  computable bijection $\langle \cdot,\cdot \rangle$  between $\N^2$ and $\N$.
\begin{obs}{\em
Seen as operators from $X^\N$ to $X$, the operators
$$\{(M_n,F_n)\}_{n \in \N} \mapsto (M_\exists,F_\exists)$$
and
$$\{(M_n,F_n)\}_{n \in \N} \mapsto (M_\forall,F_\forall)$$
are continuous, and the operators commute with completions in the sense that if each $M_n$ is the completion of $(N,||\cdot||_n)$  then $M_\exists$ is the completion of $(N,||\cdot||_\exists)$ and $M_\forall$ is the completion of $(N,||\cdot||_\forall)$.}
\end{obs}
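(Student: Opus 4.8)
The plan is to verify the two assertions — that the operators are continuous maps $X^\N\to X$, and that they commute with completions — separately, treating $M_\exists$ in full and then indicating the minor changes needed for $M_\forall$. Throughout I identify the underlying space of the restricted constructions, namely the finitely supported functions $\N^2\to\Q$, with $N$ along the fixed bijection $\langle\cdot,\cdot\rangle$.

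First I would check that the operators really land in $X$. On the finitely supported sequences both norm formulas collapse to \emph{finite} sums of rationals — for $\|\cdot\|_\forall$ because the $n$-th summand vanishes as soon as $w^{(n)}=w^{(n+1)}=\Theta$ — so $\|\cdot\|_\exists$ and $\|\cdot\|_\forall$ are genuine $\Q$-valued norms on $N$ (positive-definiteness uses that each $\|\cdot\|_n$ is a norm; homogeneity and the triangle inequality are inherited), and $F_\exists,F_\forall$ are $\Q$-valued with norm at most $1$ by the computations already given in the text (using $\max\{a,b\}\le a+b$ in the universal case). For continuity, recall that $X$ carries the subspace topology from $(N\to\Q)^2$, so an operator into $X$ is continuous iff, for every $w\in N$, the evaluation maps $\mathbf x=((\|\cdot\|_n,F_n))_n\mapsto\|w\|_{*}$ and $\mathbf x\mapsto F_{*}(w)$, for $*\in\{\exists,\forall\}$, are continuous on $X^\N$. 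By the previous remark each of these is a fixed finite sum — indexed by the support of $w$, possibly shifted by one, a set depending on $w$ alone — of terms of the form $c\,\|v\|_n$, $c\,F_n(v)$ or $c\,\max\{\|v\|_n,\|v'\|_{n+1}\}$ with $c\in\Q$ and $v,v'\in N$; each such term factors through finitely many coordinate projections $X^\N\to X\to\Q$ followed by $+$, $\max$ and scalar multiplication, hence is continuous, and therefore so is the operator.

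For the compatibility with completions, suppose each $M_n$ is the completion of $(N,\|\cdot\|_n)$. I would show that the natural inclusion of $(N,\|\cdot\|_\exists)$ into $M_\exists$, sending a finite sequence $(w^{(0)},\dots,w^{(K)})$ to the corresponding element of $\prod_n M_n$, is isometric with dense range, and that $M_\exists$ is complete; together these are precisely the statement that $M_\exists$ is the completion of $(N,\|\cdot\|_\exists)$. Completeness of $M_\exists$ is the standard fact that an $\ell^1$-sum of Banach spaces is Banach (a Cauchy sequence is coordinatewise Cauchy, the coordinatewise limits lie in $M_\exists$, and norm convergence follows by a tail estimate). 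For density, given $f\in M_\exists$ and $\varepsilon>0$ I would choose $K$ with $\sum_{n>K}\|f(n)\|_n<\varepsilon/2$ and, for $n\le K$, use density of $N$ in $M_n$ to pick $w^{(n)}\in N$ with $\|f(n)-w^{(n)}\|_n<\varepsilon/(2(K+1))$; then $\|f-(w^{(0)},\dots,w^{(K)})\|_\exists<\varepsilon$. The case of $M_\forall$ runs the same way once one records the elementary two-sided estimate
$$\frac12\sum_{n}2^{-n}\|g(n)\|_n \le \|g\|_\forall \le \frac32\sum_{n}2^{-n}\|g(n)\|_n,$$
which exhibits $\|\cdot\|_\forall$ as an equivalent norm on the weighted $\ell^1$-space $\{g:\sum_n 2^{-n}\|g(n)\|_n<\infty\}$; in particular $M_\forall$ is complete (after the isometric substitution $g(n)\mapsto 2^{-n}g(n)$ this weighted space is the $\ell^1$-sum of the $M_n$ up to this equivalence), and in the density argument one first truncates $f$ beyond an index $K$ — the resulting $\|\cdot\|_\forall$-error is controlled by the tail $\sum_{n\ge K}2^{-n}\|f(n)\|_n$ together with the single extra term $2^{-(K+1)}\|f(K+1)\|_{K+1}$ coming from the overlap in the definition of $\|\cdot\|_\forall$ — and then approximates each of the finitely many remaining coordinates inside $M_n$.

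I expect no step to be genuinely deep. The only place requiring real care is the universal construction: since $\|\cdot\|_\forall$ couples consecutive coordinates, one must check that the formula still defines a norm on finitely supported sequences, and one needs the displayed norm-equivalence before the standard $\ell^1$-type density and completeness arguments apply. The continuity half is essentially bookkeeping, the one substantive observation being that on $N$ every quantity involved is a finite sum depending on a set of input coordinates that is fixed in advance.
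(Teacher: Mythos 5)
Your proof is correct; the paper states this as an Observation with no proof at all, and your verification --- continuity checked pointwise at each $w\in N$ via the fact that on finitely supported vectors both norms reduce to finite sums over an index set determined by $w$ alone, plus the standard $\ell^1$-sum completeness and truncate-then-approximate density arguments (with the two-sided equivalence of $\|\cdot\|_\forall$ with the weighted $\ell^1$-norm) --- is exactly the routine argument the author leaves to the reader. One cosmetic slip: to bound the norm of $F_\forall$ by $1$ you need $a+b\le 2\max\{a,b\}$, not $\max\{a,b\}\le a+b$; the latter is the inequality you actually use for the upper half of your norm-equivalence display.
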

\subsubsection{The complexity of attaining the norm}
Let $X$ be as in the previous section.
Let $\mathcal X$ denote the class of all subsets $C$ of $\N^\N$ such that for some continuous function $\phi:\N^\N \rightarrow X$ we have that
\begin{center}$x \in C \Leftrightarrow F_{\phi(x)}$ attains its $||\cdot||_{\phi(x)}$-norm\end{center} or, in short,
$$x \in C \Leftrightarrow \phi(x) \in Y.$$
 We will show that all Borel sets in $\N^\N$ are in $\mathcal X$. This is an immediate consequence of the following:
\begin{lem}\hfill
\begin{enumerate}[\em a)]
\item
All subsets $C$ of $\N^\N$ that are both closed and open are in $\mathcal X$.

\item The class $\mathcal X$ is closed under countable unions and intersections.
\end{enumerate}
\end{lem}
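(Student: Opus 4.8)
The plan is to handle the two clauses separately, treating the concrete spaces $M_\top$, $M_\bot$, $M_\exists$, $M_\forall$ and the two operators just built as black boxes, together with the two Observations. For (a), let $C\subseteq\N^\N$ be clopen. The point is that its characteristic function is continuous, hence locally constant: membership of $x$ in $C$ is already decided by a finite initial segment of $x$. Let $x_\top\in X$ be the pair obtained by restricting $||\cdot||_\top$ and $F_\top$ to $N$ (by the first Observation this really is an element of $X$, with $\Q$-valued norm, whose completion is $M_\top$), and similarly let $x_\bot\in X$ come from the negative base. Define $\phi(x)=x_\top$ when $x\in C$ and $\phi(x)=x_\bot$ otherwise. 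Since $C$ is clopen, $\phi$ is locally constant, hence continuous; and since $F_\top$ attains its norm while $F_\bot$ has norm $1$ but does not, $x\in C\Leftrightarrow\phi(x)\in Y$. Hence $C\in\mathcal X$.

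For (b), I would first insert a normalisation step: every $C\in\mathcal X$ admits a witness $\phi$ for which $F_{\phi(x)}$ has norm exactly $1$ for every $x$, not merely at most $1$. Given any witness $\phi$, let $\psi(x)$ be the $M_\exists$-combination of the sequence whose $0$-th term is $\phi(x)$ and all of whose other terms equal $x_\bot$. The $x_\bot$-terms raise the norm of the combined functional to $1$ without themselves attaining it, so $F_{\psi(x)}$ always has norm $1$ and, by the analysis of $F_\exists$ above, attains it precisely when $F_{\phi(x)}$ attains norm $1$, i.e.\ precisely when $\phi(x)\in Y$. Continuity of $\psi$ follows from continuity of the $M_\exists$-operator $X^\N\to X$, and $\psi$ lands in $X$ because finitely supported arguments give $\Q$-valued norms and functionals. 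So from now on all witnesses may be assumed normalised in this sense.

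Next, given $C_k\in\mathcal X$ with normalised witnesses $\phi_k$, the assignment $x\mapsto(\phi_0(x),\phi_1(x),\dots)$ is a continuous map $\N^\N\to X^\N$. Composing it with the $M_\exists$-operator gives a continuous $\phi^{\cup}\colon\N^\N\to X$, and the equivalence recorded for the existential quantifier gives that $\phi^{\cup}(x)\in Y$ iff $\phi_k(x)\in Y$ for some $k$, iff $x\in\bigcup_k C_k$. Composing it instead with the $M_\forall$-operator gives a continuous $\phi^{\cap}\colon\N^\N\to X$, and the equivalence recorded for the universal quantifier gives that $\phi^{\cap}(x)\in Y$ iff $\phi_k(x)\in Y$ for every $k$, iff $x\in\bigcap_k C_k$. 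This establishes closure under countable unions and intersections.

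I expect the delicate point to be the normalisation step, or rather the reason it is needed: $X$ only bounds the functional norm by $1$ while $Y$ demands norm exactly $1$ together with its attainment, so without first forcing the witnesses into normal form one would have to check by hand that the combined functional cannot attain norm $1$ at points where its norm equals $1$ only as a supremum that is never realised. Everything else — continuity and $\Q$-valuedness, supplied by the Observations, and the two attainment equivalences, which are exactly the $M_\exists$- and $M_\forall$-computations carried out above — is routine.
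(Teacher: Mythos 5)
Your proof is correct and follows the same route as the paper: clopen sets are handled via the $\top$/$\bot$ base spaces, and countable unions and intersections via composition of the witnessing maps with the $M_\exists$- and $M_\forall$-operators. The only difference is your normalisation step, which the paper omits and which is in fact unnecessary: even when the functionals $F_{\phi_k(x)}$ are merely bounded by $1$ in norm, the equivalence ``$F_\exists$ (resp.\ $F_\forall$) has norm $1$ and attains it if and only if some (resp.\ every) $F_k$ has norm $1$ and attains it'' still holds, because attainment of the value $1$ on the unit sphere by a functional of norm $\leq 1$ already forces its norm to equal $1$.
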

\proof\hfill
\begin{enumerate}[a)]
\item We let $\phi(x) = (||\cdot||_{\top},F_{\top})$ if $x \in C$ and $\phi(x) = (||\cdot||_{\bot},F_{\bot})$ otherwise.
\item
For each $n \in \N$, let $C_n \in X$, and let $\phi_n$ define $C_n$ as in the definition of $\mathcal X$.
We let $\phi_\cup$ be the composition of $\{\phi_n\}_{n\in \N}$  with the restriction of the $M_\exists$-operator discussed above, and $\phi_\cap$ be the corresponding composition with the restriction of the $M_\forall$-operator.
Then $$x \in \bigcup_{n \in \N}C_n \Leftrightarrow \phi_\cup(x) \in Y$$ and $$x \in \bigcap_{n \in \N}C_n \Leftrightarrow \phi_\cap(x) \in Y.\eqno{\qEd}$$
\end{enumerate}

\begin{thm}\label{BO} Let $ Y$ be as above. Then $ Y$ is not Borel.
\end{thm}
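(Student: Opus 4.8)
The plan is to get the theorem almost for free from the lemma just stated, using one classical input from descriptive set theory: the Borel hierarchy on $\N^\N$ does not collapse. First I would record the consequence already announced in the text: since $\mathcal{X}$ contains all clopen subsets of $\N^\N$ and is closed under countable unions and intersections, and since in the zero-dimensional space $\N^\N$ every Borel set is obtained from clopen sets by transfinitely iterating countable unions and countable intersections (complements are not needed), every Borel $C \subseteq \N^\N$ belongs to $\mathcal{X}$; that is, there is a continuous $\phi : \N^\N \to X$ with $C = \phi^{-1}(Y)$.

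Next I would argue by contradiction. Suppose $Y$ is Borel. Then $Y$ sits at some level $\mathbf{\Sigma}^0_\alpha$ of the Borel hierarchy on the Polish space $X$, for some countable ordinal $\alpha$. Because $\mathbf{\Sigma}^0_\alpha$ is closed under preimages along continuous maps, for every $C \in \mathcal{X}$ — say $C = \phi^{-1}(Y)$ with $\phi$ continuous — we would get $C \in \mathbf{\Sigma}^0_\alpha$. By the previous paragraph this would make \emph{every} Borel subset of $\N^\N$ a $\mathbf{\Sigma}^0_\alpha$ set.

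That conclusion is false: it is a classical theorem that for every countable ordinal $\alpha \geq 1$ there is a Borel — indeed $\mathbf{\Pi}^0_\alpha$-complete — subset of $\N^\N$ that is not $\mathbf{\Sigma}^0_\alpha$ (equivalently, the Wadge ordering of the Borel subsets of $\N^\N$ has no top element). This contradiction shows that $Y$ is not Borel; and since, as noted earlier, the condition defining $Y$ is a plain existential statement over a point of the unit sphere satisfying an arithmetical requirement, $Y$ is $\Sigma^1_1$, so $Y$ is a genuinely non-Borel analytic set.

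There is no real obstacle left at this stage: all the substantive work — building the continuous reductions out of the operators $M_\exists$, $M_\forall$ and the base spaces $M_\top$, $M_\bot$ — has already been done in proving the lemma. The only thing to be careful about is to phrase the final step as an appeal to the non-collapse of the Borel hierarchy (or to the absence of a Wadge-greatest Borel set), rather than trying to name an explicit non-Borel set, since any such attempt would merely re-prove that classical fact.
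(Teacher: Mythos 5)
Your argument is correct and is essentially the paper's own: both reduce every Borel subset of $\N^\N$ continuously to $Y$ via the preceding lemma, and then invoke the classical fact that no single Borel set can serve as a continuous-preimage-universal set for all Borel sets (which you correctly unpack as the non-collapse of the Borel hierarchy together with closure of each level $\mathbf{\Sigma}^0_\alpha$ under continuous preimages). The paper simply cites this last fact from Kechris rather than spelling out the level-by-level argument.
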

\begin{proof}
There is no Borel set $B$ such that all other Borel sets $C$ are the continuous preimages of $B$. This is a standard result in descriptive set theory, see Kechris \cite{Kechris} for an introduction.\end{proof}
\begin{rem} As mentioned above, this theorem is a consequence of a theorem due to R. Kaufman \cite{Kauf}. In order to employ Kaufman's result, note that the unit ball of the dual of a separable Banach space is compact and metrizable in the weak star topology, and that the Borel structure for the unit ball will be the same for the weak star topology and the topology we consider.\end{rem}
\subsubsection{Closed balls }
Now, let us turn the attention to the original problem. Let $Z$ be the set of norms on $N$, considered as a closed subset of $N \rightarrow \Q$. 
\begin{cor}\label{Cor1}
The set of norms $||\cdot|| \in Z$ such that the balls around \begin{center}$(2,0,\Theta)$, $(0,2,\Theta)$ and $(2,2,\Theta)$\end{center} with radius 1 have a nonempty intersection in the completion, will be analytic, but not Borel.\end{cor}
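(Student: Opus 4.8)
The plan is to reduce the set $Y$ of Theorem~\ref{BO} to the set $\mathcal B$ of norms described in the statement by a continuous map, so that the non-Borelness of $Y$ transfers to $\mathcal B$. That $\mathcal B$ is analytic needs no new idea: $||\cdot||\in\mathcal B$ asserts the existence of a sequence from $N$ that is fast converging with respect to $||\cdot||$ and whose limit lies within distance $1$ of each of the three prescribed centres, and this is a single existential number-sequence quantifier over $\N^\N$ in front of an arithmetical matrix, hence $\Sigma^1_1$.

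For the reduction, I would first fix, using the computable bijection $\langle\cdot,\cdot\rangle$, an identification of $N$ with $\Q\oplus\Q\oplus N$, writing a general element as a triple $(a,b,z)$ with $a,b\in\Q$ and $z\in N$; under this identification the points named in the statement are exactly the triples $(2,0,\Theta)$, $(0,2,\Theta)$, $(2,2,\Theta)$. Given a pair $x=(||\cdot||_x,F_x)\in X$ I would send it to the function
$$\Psi(x)(a,b,z)\;=\;\max\{|a|,\,|b|,\,||z||_x,\,|a+b+F_x(z)|\}$$
on $N$. Each expression under the maximum is a seminorm and the first three already separate points, so $\Psi(x)$ is a norm; it takes values in $\Q$ because $||\cdot||_x$ and $F_x$ do; and $\Psi$ is continuous because the value of $\Psi(x)$ at any fixed vector of $N$ depends on only finitely many coordinates of $x\in(N\to\Q)^2$. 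Thus $\Psi\colon X\to Z$ is a well-defined continuous map. Moreover, since $|F_x(z)|\le||z||_x$ the norm $\Psi(x)$ restricts to $||\cdot||_x$ on the third summand and is equivalent to the product norm on $\Q\oplus\Q\oplus N$, so the completion of $(N,\Psi(x))$ is precisely $M_{F_x}=\R^2\times M$ in the sense of Lemma~\ref{AE}, where $M$ is the completion of $(N,||\cdot||_x)$, and the three designated balls in the completion are the balls $B_1,B_2,B_3$ of that lemma.

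The key remaining point is the identity $\Psi^{-1}(\mathcal B)=Y$. Recall that $Y=\{x\in X: F_x$ has norm $1$ and attains it$\}$. If $||F_x||=1$, then Lemma~\ref{AE} says that the three balls meet in $M_{F_x}$ exactly when $F_x$ attains its norm, so $\Psi(x)\in\mathcal B\iff x\in Y$. If $||F_x||<1$, then $x\notin Y$, and the balls cannot meet either: the computation establishing $(1)\Rightarrow(2)$ in the proof of Lemma~\ref{AE} forces any common point to yield a $z$ with $||z||_x\le1$ and $F_x(z)=1$, contradicting $|F_x(z)|\le||F_x||<1$; hence $\Psi(x)\notin\mathcal B$. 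In either case $\Psi(x)\in\mathcal B\iff x\in Y$, so $Y=\Psi^{-1}(\mathcal B)$. Since $\Psi$ is continuous and $Y$ is not Borel by Theorem~\ref{BO}, $\mathcal B$ cannot be Borel.

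I expect the only genuine obstacle to be bookkeeping rather than mathematics: one must check carefully that $\Psi(x)$ is really a rational-valued norm (not merely a seminorm), that it depends continuously on $x$ (indeed coordinatewise in a locally constant way), and that passing to the completion reproduces $M_{F_x}$ faithfully together with the correct three centres; all the substantive content has already been isolated in Lemma~\ref{AE} and in the proof that $Y$ is not Borel. The effective refinement, that $\mathcal B$ is $\Sigma^1_1$ but not hyperarithmetical, would follow by the same route once the corresponding effective version of Theorem~\ref{BO} is available, since the reduction $\Psi$ is computable.
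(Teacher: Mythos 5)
Your proposal is correct and follows essentially the same route as the paper, which simply cites Lemma \ref{AE}, Theorem \ref{BO}, and the continuity of the Lemma \ref{AE} construction as a map from $X$ to $Z$; you have merely filled in the details, including the worthwhile observation that when $||F_x||<1$ the balls cannot meet, so that $Y=\Psi^{-1}(\mathcal B)$ holds on all of $X$ and not just on the norm-one functionals.
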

This is a direct consequence of Lemma \ref{AE}, Theorem \ref{BO} and the fact that the construction in Lemma \ref{AE} can be seen as a continuous map from $X$ to $Z$.
\subsection{Effective versions}
All constructions underlying our proof of Theorem \ref{BO} are computable. This leads us to the following consequence of the proof:
\begin{cor}\label{cor.eff} Let $C \subseteq \N$ be hyperarithmetical. Then there is a computable map
$$n \mapsto ||\cdot||_n$$ from $\N$ to the set of norms on $N$ such that the three unit closed balls around $(2,0,\Theta)$, $(0,2,\Theta)$ and $(2,2,\Theta)$ have nonempty intersection in the $||\cdot||_n$ completion of $N$if and only if $n \in C$.
\end{cor}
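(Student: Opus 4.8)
The plan is to carry out, effectively and uniformly in $n$, the construction behind the lemma preceding Theorem~\ref{BO}, driven by a computable well-founded tree that codes $C$, and then to apply Lemma~\ref{AE} exactly as in the proof of Corollary~\ref{Cor1}. First I would record, from the standard theory of the hyperarithmetical hierarchy (cf.\ the remark in Section~\ref{Sec1}, or \cite{Sacks}), that $C$ can be presented by a computable well-founded tree $T \subseteq \N^{<\N}$ with a computable labelling: each leaf $\sigma$ carries a decidable predicate $R_\sigma$ on $\N$, each internal node is labelled $\bigcup$ or $\bigcap$, and when $T$ is evaluated from the leaves upwards at a number $n$ --- a leaf $\sigma$ counting as \emph{true at $n$} iff $R_\sigma(n)$, a $\bigcup$-node iff some child is true, a $\bigcap$-node iff every child is true --- the root is true at $n$ exactly when $n \in C$. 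Since the parameter is a single number, the leaf conditions really are decidable; it is the transfinite length of the evaluation that allows $C$ to be non-decidable.

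Next I would define, by recursion on the rank of the nodes of $T$ and uniformly in $n$, a pair $(||\cdot||^{\sigma}_n, F^{\sigma}_n) \in X$: at a leaf take $(||\cdot||_{\top}, F_{\top})$ if $R_\sigma(n)$ holds and $(||\cdot||_{\bot}, F_{\bot})$ otherwise; at a $\bigcup$-node apply the restricted $M_\exists$-operator to the pairs attached to the children, and at a $\bigcap$-node apply the restricted $M_\forall$-operator. Using the fixed computable pairing $\langle\cdot,\cdot\rangle$ and the computability of $T$, one fixes a computable indexing of the coordinates arising from iterating these operators along $T$ and checks that all the bookkeeping is computable; this is just the assertion that ``all constructions underlying the proof of Theorem~\ref{BO} are computable''. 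Every $F^{\sigma}_n$ then has norm $1$ (which holds at the leaves and is preserved by $M_\exists$ and $M_\forall$), and an induction on the rank of $\sigma$ --- using the equivalences established for $M_\exists$ and $M_\forall$ together with the fact that $F_{\top}$ attains its norm while $F_{\bot}$ does not --- shows that $F^{\sigma}_n$ attains its norm in the completion of $(N, ||\cdot||^{\sigma}_n)$ if and only if $\sigma$ is true at $n$. Taking $\sigma$ to be the root yields a pair $(||\cdot||^{\ast}_n, F^{\ast}_n)$ for which $F^{\ast}_n$ attains its norm iff $n \in C$.

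The step I expect to be the main obstacle is verifying that $n \mapsto ||\cdot||^{\ast}_n$ is genuinely \emph{computable}, even though it is defined by a recursion whose length is the rank of $T$. The way around this is that the underlying vector space is always $N$, the $\Q$-valued functions of finite support: a single finite-support vector involves only finitely many of the coordinates produced along $T$, hence only finitely many root-to-leaf branches, hence only a finite subtree of $T$, and computing its norm then unwinds the recursive definitions of $||\cdot||_\exists$ and $||\cdot||_\forall$ over that finite subtree into a finite combination of rationals (finite sums, maxima, and the weights $2^{-(n+1)}$, which can be truncated past the support). So the norm value is rational and is obtained uniformly in $n$ and in the vector, and $n \mapsto ||\cdot||^{\ast}_n$ is a computable map into the $\Q$-valued norms on $N$.

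Finally I would compose with the construction of Lemma~\ref{AE}, which is a computable map sending a pair $(||\cdot||, F)$ on $N$ to the $\Q$-valued norm
$$||(x,y,z)|| \;=\; \max\{|x|, |y|, ||z||, |x + y + F(z)|\}$$
on $\R^2 \times N$, and which commutes with completion (as do $M_\exists$ and $M_\forall$), so that its completion is $M_{F^{\ast}_n}$ with $M$ the completion of $(N, ||\cdot||^{\ast}_n)$. By Lemma~\ref{AE}, the three unit balls around $(2,0,\Theta)$, $(0,2,\Theta)$ and $(2,2,\Theta)$ have nonempty intersection there precisely when $F^{\ast}_n$ attains its norm, that is, precisely when $n \in C$; reindexing $\R^2 \times N$ computably as $N$ gives the desired computable map $n \mapsto ||\cdot||_n$.
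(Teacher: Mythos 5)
Your proposal is correct and follows essentially the same route as the paper: represent the hyperarithmetical set $C$ by a computable well-founded labelled tree, assign $(||\cdot||_\top,F_\top)$ or $(||\cdot||_\bot,F_\bot)$ at the leaves and apply the $M_\exists$/$M_\forall$ operators at the branching nodes, and finish with Lemma~\ref{AE}. The only (harmless) difference is that where the paper invokes the recursion theorem to make the transfinite recursion along $T$ computable, you justify computability directly by observing that evaluating the norm on a finite-support vector only unwinds a finite subtree.
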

\begin{proof}
When $C$ is hyperarithmetical, there is a computable, well founded labelled tree $T$ on $\N$ such that
\begin{enumerate}[i)]
\item leaf nodes are labelled with an index for a primitive recursive set.
\item branching nodes are labelled with $\cap$ or with $\cup$
\end{enumerate}
and such that $C_{\rm root} = C$. (For each node $\sigma$ in $T$, $C_\sigma$ is defined by recursion on $\sigma$ according to the labeling.)
Then we use the recursion theorem together with the codings of
$\top$,$\bot$, $\exists$ and $\forall$ as operators on elements in
$X$. For each node $\sigma \in T$ this leaves us in a computable way
with a norm $||\cdot||_{\sigma,n}$ and a linear functional
$F_{\sigma,n}$ such that $F_{\sigma,n}$ attains its norm if and only
if $n \in C_\sigma$. Finally, we use Lemma \ref{AE}, and the corollary
is proved.\end{proof}

We also obtain the following corollary of our proof:
\begin{cor}\label{2.10}
For each hyperarithmetical set $C$ there is a computable norm on $N$ such that $C$ is $m$-reducible to the set of triples of vectors in $N$ such that the three corresponding closed balls with radius 1 have a nonempty intersection in the completion.\end{cor}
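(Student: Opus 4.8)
The plan is to \emph{internalise} the index $n$ of Corollary~\ref{cor.eff} into the choice of centres, amalgamating the whole computable sequence of norms produced there into a single computable norm on $N$. So fix a hyperarithmetical $C$ and let $n \mapsto ||\cdot||_n$ be the computable sequence of norms on $N$ furnished by Corollary~\ref{cor.eff}, so that the three closed balls of radius $1$ around $(2,0,\Theta)$, $(0,2,\Theta)$, $(2,2,\Theta)$ have a nonempty intersection in the $||\cdot||_n$-completion of $N$ if and only if $n \in C$. Using a computable pairing on basis vectors, one identifies $N$ with $\bigoplus_{n \in \N}N^{(n)}$, each $N^{(n)}$ a copy of $N$; this is a computable isomorphism of enumerated $\Q$-vector spaces. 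Writing $f = (f_n)_n$ for the finitely supported decomposition of $f \in N$ along this sum, put
$$||f||_* = \sum_{n \in \N}||f_n||_n .$$
Only finitely many terms are nonzero and each is a computable rational, so $||\cdot||_*$ is a computable $\Q$-valued norm on $N$ --- the $\ell^1$-direct sum of the spaces $(N^{(n)},||\cdot||_n)$.

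The completion $\widehat N$ of $(N,||\cdot||_*)$ is then the $\ell^1$-sum $\bigoplus_n^{\ell^1}\widehat N^{(n)}$ of the completions, with elements $w=(w_n)_n$ subject to $\sum_n||w_n||_n<\infty$. The point to verify is that if $a_1,a_2,a_3$ all lie in one summand $\widehat N^{(n)}$, then their closed unit balls meet in $\widehat N$ if and only if they meet in $\widehat N^{(n)}$: one direction is the obvious isometric embedding, and for the other, if $w$ lies in all three balls then $||w_n-a_i||_n+\sum_{m\neq n}||w_m||_m\leq 1$ for $i=1,2,3$, so $w_n$ --- viewed in $\widehat N^{(n)}\subseteq\widehat N$ --- already lies in all three. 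Thus the intersection problem in $\widehat N$ collapses to the one inside block $n$; relocating mass out of block $n$ only increases the distances to the $a_i$, so nothing spurious is created.

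I would then define $r:\N\to N^3$ by letting $r(n)$ be the triple obtained by placing the three vectors that Corollary~\ref{cor.eff} denotes $(2,0,\Theta)$, $(0,2,\Theta)$, $(2,2,\Theta)$ into the summand $N^{(n)}$ and transporting back to $N$ along the fixed identification; $r$ is computable. By the previous paragraph and the defining property of $||\cdot||_n$, the three radius-$1$ closed balls around the entries of $r(n)$ meet in $\widehat N$ precisely when $n\in C$. Hence $C=r^{-1}(D)$, where $D$ is the set of triples of vectors in $N$ whose associated radius-$1$ balls meet in the completion of $(N,||\cdot||_*)$, so $C$ is $m$-reducible to $D$.

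The real content --- the Alfsen--Effros construction and the tree-coding of the hyperarithmetical hierarchy by the operators $\top,\bot,\exists,\forall$ --- is already carried by Lemma~\ref{AE} and Corollary~\ref{cor.eff}. The only thing genuinely needing verification here is the behaviour of closed balls under $\ell^1$-amalgamation, which is immediate from the triangle inequality; the computability of the pairing, of $||\cdot||_*$, and of $r$ is routine. The mild pitfall to keep an eye on is the choice of amalgamating norm: an $\ell^1$- (or $\ell^\infty$-) type sum keeps the blocks "rigid" so that intersections do not leak between blocks, whereas a less careful combination could.
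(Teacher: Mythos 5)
Your proof is correct and follows essentially the same route as the paper: both amalgamate the computable sequence of norms from Corollary~\ref{cor.eff} into a single direct-sum norm on $N$, place the three centres in the $n$-th block, and observe that projecting onto that block shows the balls meet in the completion iff they meet inside the block. The only difference is that you take an $\ell^1$-sum where the paper takes the Euclidean ($\ell^2$) sum; both work by the same projection argument, and your choice has the minor advantage of keeping the norm $\Q$-valued, whereas the paper explicitly notes its amalgamated norm only takes algebraic values on $N$.
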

\begin{proof}
For each $n$, let  $M_n$ be the Banach space realizing Corollary \ref{cor.eff} for $C$ and $n$, and let $B_1$, $B_2$ and $B_3$ be the three closed balls used.
We consider the {\em Euclidian product} of the $M_n$'s based on the norm $$||f|| = (\sum_{n \in \N}||f(n)||_n^2)^\frac{1}{2}.$$ We obtain one computable Banach space with a computable  list of triples of closed balls $$\{(B_{n,1},B_{n,2},B_{n,3})\mid n \in \N\}$$such that for all $n$, $n \in C$ if and only if $$B_{n,1} \cap B_{n,2} \cap B_{n,3} \neq \emptyset.$$
We obtain this by placing one copy of the centers of $B_1$, $B_2$ and $B_3$ in each coordinate, and observe that if three balls with centers in one coordinate intersect anywhere, then they intersect in that coordinate.\end{proof}
\begin{rem} The results of Kaufman \cite{Kauf} suggest that there is a computable Banach space where the set of triples of closed balls with radius 1 and centers in the given dense set is complete $\Sigma^1_1$, but our proof method does not give us that.\end{rem}

\section{Internal algorithms on Banach spaces}\label{Sec3}
\subsection{Discussion} If $X$ is a computable metric space, the internal structure will consist of the given enumeration of a dense subset together with the metric. In order to form a language in which we can express what we may call {\em internal algorithms} we need some prime total or partial  operators that make sense for all separable metric spaces, that are computable and that can be combined with computable functions on $\N$, constructions by recursion and a selected set of computable functions and operators on $\R$, in order to define other functions. One possible candidate could be the function $lim$ that to any fast converging sequence gives the limit, and that is undefined when the argument is not fast convergent. One problem with this will be that a metric space in general is not an algebraic structure in any sense, so it is difficult internally to construct sequences that we might apply $lim$ to in order to make use of it.

A separable Banach space has a much richer internal structure. In this section, we will suggest two computable operators, one total and one partial, and we will discuss the internal computational power of these two operators. Using the first operator, which is a total extension of the partial function giving the limit of a fast converging sequence, we will show that if a Banach space $M$ is computably closed as a subspace of a metric space $X$, then it is a computable retract of that space, see Subsection 3.3. We will use this in Subsection 3.4 in order to prove an effective version of the embedding theorem from \cite{D3}. In Subsection 3.5 we will define the {\em accumulation operator} $\bar \alpha$, and show that $\bar \alpha$ can be used to ``write programs" for every computable function from a computable metric space to a computable Banach space.

We will employ definition by cases, also adjusted to $\R$, and definitions by recursion. We consider $\R$ as a given Banach space, and we consider ``norm", ``vector sum" and ``multiplication with a scalar" for the given Banach spaces as basic computable functions.

\subsection{Partial functions and definitions by cases}
We have defined what we mean by a computable function $f:X \rightarrow Y$, when $X$ and $Y$ are computable metric spaces. There are several natural ways to extend this to partial functions. We will choose the one that is most useful for the purpose of this paper.

\begin{defi}Let $X$ and $Y$ be computable metric spaces, $Z \subseteq X$ open.
Let $f:Z \rightarrow Y$. We say that $f$ is {\em partial (externally) computable} if there is a partial computable $F:\N^\N \rightarrow \N^\N$ such that $F(\gamma)$ is defined and represents $f(x)$ whenever $\gamma$ represents $x \in Z$.\end{defi}
Let $Seq$ be the set of finite sequences from $\N$ ordered by the initial segment ordering $\prec$. We identify $Seq$ with the set of sequence numbers.

If $F:\N^\N \rightarrow \N^\N$ is partial computable in the Turing-Kleene sense, there will be a computable function $\hat F:Seq \rightarrow Seq$ that is $\prec$-increasing and such that
$$F(\gamma) = \lim_{n \rinf} \hat F((\gamma(0) , \ldots , \gamma(n-1))).$$
Thus every partial externally computable  function from $X$ to $Y$ is represented by a computable $\hat F$.
\begin{defi}Let $f$ and $g$ be partial functions from $\R$ to any set $X$.

We define the function $Case(f,g,y)$ by
%\begin{enumerate}[$\bullet$]
%\item[] $Case(f,g,y)(x) = f(x)$ when $x < y$
%\item[] $Case(f,g,y)(x) = g(x)$ when $y < x$
%\item[] $Case(f,g,y)(y) = z$ if $f(y) = g(y) = z$
%\item[] $Case(f,g,y)(y)$ is undefined if $f(y) \neq g(y)$.
\[Case(f,g,y)(x) = \begin{cases}
  f(x)&\mbox{when $x < y$;}\\
  g(x)&\mbox{when $y < x$;}\\
  z&\mbox{if $f(y) = g(y) = z$;}\\
  \mbox{undefined}&\mbox{if $f(y) \neq g(y)$.}
\end{cases}
\]
%\end{enumerate}
\end{defi}
The following is a trivial observation:
\begin{lem} If $X$ is a computable metric space, and $f$ and $g$ are partial externally computable functions from $\R$ to $X$, then
$$h(y,x) = Case(f,g,y)(x)$$ is externally computable.\end{lem}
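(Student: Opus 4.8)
The plan is to exhibit an algorithm. Given a fast-converging representation of the pair $(y,x)\in\R^2$, I would first extract from it, coordinatewise (projections are non-expanding for the product metric), fast-converging representations of $y$ and of $x$. By a routine speed-up it then suffices to compute, uniformly in these representations, an index $m_k$ of a dense-set point with $d(x_{m_k},h(y,x))\le 2^{-k}$ for each $k$; for then $n\mapsto m_{n+1}$ is a representing function of $h(y,x)=Case(f,g,y)(x)$, since $d(x_{m_{n+1}},x_{m_{n+2}})\le 2^{-(n+1)}+2^{-(n+2)}<2^{-n}$. Outside the domain of $h$ the functional is permitted to diverge, so I only need correctness and termination when $(y,x)$ lies in the domain.

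I would fix partial computable $F,G$ realising $f,g$, with associated $\prec$-increasing maps $\hat F,\hat G$ on $Seq$ as in the discussion above. Feeding $\hat F$ longer and longer prefixes of the representation of $x$ produces, after finitely many steps \emph{provided $f(x)$ is defined}, an index $a^f$ with $d(x_{a^f},f(x))\le 2^{-k-2}$; call this subprocess $A_f$, and let $A_g$ (output $a^g$) be the symmetric process from $\hat G$. Moreover, at stage $n$ the representations of $x$ and $y$ yield rationals $r^x_n,r^y_n$ with $|x-r^x_n|\le 2^{-n+1}$ and $|y-r^y_n|\le 2^{-n+1}$; hence the search $C_<$ for an $n$ with $r^y_n-r^x_n>2^{-n+2}$ halts exactly when $x<y$ and certifies $x<y$ when it halts, and symmetrically $C_>$ certifies $x>y$.

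To compute $m_k$ I would dovetail $C_<$, $C_>$, $A_f$ and $A_g$ and halt at the first of the following events: (i) both $C_<$ and $A_f$ have halted, in which case output $a^f$; (ii) both $C_>$ and $A_g$ have halted, output $a^g$; (iii) both $A_f$ and $A_g$ have halted and $d(x_{a^f},x_{a^g})\le 2^{-k-1}$, output $a^f$. If $x<y$ then $h(y,x)=f(x)$ is defined, $C_<$ and $A_f$ both halt, and (i) eventually fires — (ii) never can, since $C_>$ does not halt — with $d(x_{a^f},h(y,x))\le 2^{-k-2}$; the case $x>y$ is symmetric. If $x=y$ then $f(x)=g(x)=h(y,x)$, both $A_f$ and $A_g$ halt, $d(x_{a^f},x_{a^g})\le 2^{-k-1}$, so (iii) fires with $d(x_{a^f},h(y,x))\le 2^{-k-2}$. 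The only delicate point is that (iii) may fire "prematurely" in a strict region; but then $x_{a^f}$ is within $2^{-k-2}$ of $f(x)$ and, since $d(x_{a^f},g(x))\le d(x_{a^f},x_{a^g})+d(x_{a^g},g(x))\le 2^{-k-1}+2^{-k-2}<2^{-k}$, within $2^{-k}$ of $g(x)$ as well; as $h(y,x)$ is always one of $f(x),g(x)$, the output is within $2^{-k}$ of $h(y,x)$ regardless. Since $C_<,C_>,A_f,A_g$ and the rational comparisons are all computable in the given representations, this describes one partial computable functional.

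The one genuinely non-routine point — the reason this is slightly more than bookkeeping — is that the relations $x<y$, $x=y$, $x>y$ cannot be decided from the representations, so one cannot simply branch on them. The algorithm circumvents this using the defining feature of $Case$, namely that the two branches $f$ and $g$ agree at the crossover $x=y$, together with the observation in the previous paragraph that the gluing clause (iii), even when it happens to fire off the diagonal, is forced to return a point close to both $f(x)$ and $g(x)$, hence to $h(y,x)$ no matter which branch is active there.
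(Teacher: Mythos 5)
Your proof is correct; the paper itself offers no argument here, dismissing the lemma as ``a trivial observation'', and your dovetailed search with the three halting clauses is exactly the standard way to realise it. You correctly identify and handle the only delicate point, namely that $x<y$, $x=y$, $x>y$ are undecidable from representations, so one must exploit the agreement $f(y)=g(y)$ at the crossover and check, as you do, that clause (iii) firing off the diagonal still returns a point within $2^{-k}$ of $h(y,x)$.
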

The Case operator will eventually be one of the operators generating the internally computable functions, see Definition \ref{def.int.comp}. Using $Case$ we can define the functions $\min$ and $\max$ on $\R^2$,  so they will be internally computable.
\subsection{Modified limits}
In this subsection we will let $M$ be a fixed separable Banach space with norm $||\cdot||$, and we use standard vector space notation for $M$.
\begin{defi} Let $\{w_n\}_{n \in \N}$ be a sequence from $M$.
\begin{enumerate}[a)]
\item We let the {\em modified sequence} $\{w^m_n\}_{n \in \N}$ be defined by 
\begin{enumerate}[$-$]
\item $w^m_0 = w_0$
\item $w^m_{k+1} = w^m_k + \lambda_k(w_{k+1} - w^m_k)$
 where $\lambda_k = 1$ if $||w_{k+1} - w^m_{k}|| \leq 2^{-k}$ and $$\lambda_k = \frac{2^{-k}}{||w_{k+1} - w^m_k||}$$ if $||w_{k+1} - w^m_k|| \geq 2^{-k}$.
\end{enumerate}
\item The {\em modified limit} of $\{w_k\}_{k \in \N}$ is

$$ml(\{w_k\}_{k \in \N}) = \lim_{k \rinf} w^m_k.$$
\end{enumerate}\end{defi}
\begin{lem}\hfill
\begin{enumerate}[\em a)]
\item The maps $$\{w_k\}_{k \in \N} \mapsto \{w^m_k\}_{k \in \N}\quad\hbox{and}\quad\{w_k\}_{k \in \N} \mapsto ml(\{w_k\}_{k \in \N})$$ are total and computable.
\item If we for all $k$ have that $||w_k - w_{k+1}|| \leq 2^{-k}$, then $w^m_k = w_k$ for all $k$.
\end{enumerate}\end{lem}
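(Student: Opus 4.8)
The plan is to prove both parts by unwinding the defining recursion for the modified sequence $\{w^m_k\}$ and checking, step by step, that everything in sight is built from computable data. For part b), the key observation is that the recursion has a fixed point: if $\|w_k - w_{k+1}\| \le 2^{-k}$ for all $k$, then I claim $w^m_k = w_k$ by induction on $k$. The base case is immediate since $w^m_0 = w_0$. For the induction step, suppose $w^m_k = w_k$. Then $\|w_{k+1} - w^m_k\| = \|w_{k+1} - w_k\| \le 2^{-k}$, so the first clause in the definition of $\lambda_k$ applies and $\lambda_k = 1$, whence $w^m_{k+1} = w^m_k + (w_{k+1} - w^m_k) = w_{k+1}$. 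This closes the induction.

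For part a), I would first argue that the map $\{w_k\} \mapsto \{w^m_k\}$ is total: each $w^m_{k+1}$ is obtained from $w^m_k$ and $w_{k+1}$ by a single vector-space operation whose scalar coefficient $\lambda_k$ is either $1$ or $2^{-k}/\|w_{k+1}-w^m_k\|$, and in the latter branch the denominator is $\ge 2^{-k} > 0$, so $\lambda_k$ is always a well-defined real in $(0,1]$; hence $w^m_{k+1}$ is always defined, and the modified sequence exists for every input. Computability of this map then follows because $\lambda_k$ is computed from $\|w_{k+1}-w^m_k\|$ by a $Case$-style definition on $\R$ (comparing with $2^{-k}$) together with the basic computable operations division, subtraction and multiplication by a scalar on $\R$ and on $M$ — all of which are among our designated basic computable functions — composed by primitive recursion on $k$.

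Next I would show $ml(\{w_k\}) = \lim_k w^m_k$ is total and computable by establishing that $\{w^m_k\}$ is always a fast converging sequence, indeed that $\|w^m_{k+1} - w^m_k\| \le 2^{-k}$ for every $k$ and every input sequence. This is the one genuine estimate in the argument, and it is the step I expect to be the main (though still modest) obstacle. It splits on the two branches: if $\|w_{k+1}-w^m_k\| \le 2^{-k}$ then $w^m_{k+1}-w^m_k = w_{k+1}-w^m_k$ has norm $\le 2^{-k}$ directly; if $\|w_{k+1}-w^m_k\| \ge 2^{-k}$ then $w^m_{k+1}-w^m_k = \lambda_k(w_{k+1}-w^m_k)$ has norm $\lambda_k\|w_{k+1}-w^m_k\| = 2^{-k}$ by the very choice of $\lambda_k$. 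So in both cases the bound holds, $\{w^m_k\}$ is fast converging, and its limit exists in the complete space $M$; moreover the limit is computed uniformly and computably from the (already computably produced) modified sequence, since a fast converging sequence is precisely the data of a point in a computable metric space. This completes part a), and the lemma is proved.
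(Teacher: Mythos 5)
Your proof is correct; the paper simply declares this lemma trivial and leaves it to the reader, and your argument supplies exactly the intended details. In particular the key estimate $\|w^m_{k+1}-w^m_k\|\le 2^{-k}$ in both branches (with equality in the rescaled branch), the boundary agreement at $\|w_{k+1}-w^m_k\|=2^{-k}$ making the $Case$-style definition total, and the induction for part b) are all exactly what the author has in mind.
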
\medskip

\noindent This is trivial and can safely be left for the reader.

When we  add the modified limit to our toolbox of internally computable functions, it is simply in order to replace the partial computable function giving the limit of a fast convergent sequence with a total one. All applications we have in mind involve constructing a fast converging sequence and then consider taking the limit as an internal process.
The map $$\{w_k\}_{k \in \N} \mapsto \{w^m_k\}_{k \in \N}$$ is a retraction of the set of all sequences to the set of fast converging sequences. 
\begin{rem}\label{mod}
If $f:(0,1) \rightarrow (0,1)$ is continuous and strictly monotone increasing, and $y$ is in the range of $f$, we can use the Case operator and modified limits over $\R$ to compute $x$ from $f$ and $y$ such that $f(x) = y$.

Since every open interval with computable endpoints, the possibilities of $\pm \infty$ included, can be put in a computable, strictly monotone correspondence with $(0,1)$, we use this to claim that the inverse of a strictly monotone continuous function $f$ defined on an interval is computable in $f$.\end{rem}
\begin{defi} Let $X$ be a computable metric space, $Y \subseteq X$ be a closed subset.
We say that $Y$ is {\em computably closed} in $X$ if
\begin{enumerate}[$-$]
\item $Y$ is the closure of a computably enumerable subset of $X$ and
\item the function giving the distance from $x \in X$ to $Y$ is computable.
\end{enumerate}\end{defi}
\begin{thm}\label{theorem.retract} Let $X$ be a separable metric space, $M \subset X$ a closed subspace such that $M$ is also a Banach space with the induced metric.
 There is a continuous function $g:X \rightarrow M$ such that $g$ is the identity on $M$ and such that
 $g$ is internally computable in the 
 enumeration of a dense subset in $M$
, the metric on $X$
and the distance function $d(x,M)$.

\end{thm}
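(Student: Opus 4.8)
\medskip
\noindent\textbf{Proof proposal.}
The plan is to use that $M$, unlike a general metric space, is closed under absolutely convergent convex combinations, together with the modified limit operator. First I would reduce the theorem to constructing a map $x \mapsto \{w_k(x)\}_{k \in \N}$ from $X$ to $M^\N$ that is continuous (product topology), internally computable from the stated data, takes values in $M$ in each coordinate, and satisfies: \emph{if $x \in M$ then $\|x - w_k(x)\| < 2^{-k-1}$ for all $k$}. Granting this, whenever $x \in M$ the sequence $\{w_k(x)\}$ is fast converging, so by part b) of the lemma on modified limits $w^m_k(x) = w_k(x)$, hence $ml(\{w_k(x)\}) = \lim_k w_k(x) = x$; for arbitrary $x \in X$ the modified limit is always defined (and must be used, since for $x \notin M$ the sequence $\{w_k(x)\}$ need not be fast converging) and lies in $M$ because $M$ is closed. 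Moreover $ml$ is continuous on $M^\N$: each $w^m_k$ is continuous in $(w_0,\dots,w_k)$, and $\|w^m_k - ml\| \leq 2^{-k+1}$ uniformly, so the convergence is uniform. Hence $g(x) := ml(\{w_k(x)\}_k)$ is a continuous $g : X \to M$, internally computable, with $g|_M = \mathrm{id}_M$.

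For the construction, let $\{m_n \mid n \in \N\}$ be the given dense subset of $M$, write $\rho(x) = d(x,M)$ (a given computable $1$-Lipschitz function), and note $\rho(x) = \inf_n d(x,m_n)$ by density. For $n,k \in \N$ set
$$a^k_n(x) = \max\{0,\; 2^{-k-1} - (d(x,m_n) - \rho(x))\}, \qquad b^k_n(x) = 2^{-n}\,a^k_n(x),$$
so $0 \leq b^k_n(x) \leq 2^{-n-k-1}$ uniformly in $x$, and $S^k(x) := \sum_n b^k_n(x)$ is continuous (uniform limit of continuous functions) and everywhere strictly positive, the latter because some $m_n$ has $d(x,m_n) < \rho(x) + 2^{-k-1}$. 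Define
$$w_k(x) \;=\; \sum_{n \in \N} \frac{b^k_n(x)}{S^k(x)}\, m_n \;=\; m_0 + \sum_{n \in \N}\frac{b^k_n(x)}{S^k(x)}(m_n - m_0).$$
Whenever $b^k_n(x) \neq 0$ one has $d(x,m_n) < \rho(x)+2^{-k-1}$, hence (the metric on $M$ being the norm) $\|m_n - m_0\| \leq d(x,m_0)+\rho(x)+2^{-k-1}$, which is locally bounded in $x$; together with $b^k_n(x) \leq 2^{-n-k-1}$ and the local lower bound on $S^k$, the series converges absolutely and locally uniformly, so $w_k$ is a continuous map $X \to M$ (the partial sums lie in $M$, which is closed). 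Finally, if $x \in M$ then $\rho(x) = 0$, every contributing $m_n$ has $\|x - m_n\| < 2^{-k-1}$, and since the coefficients are nonnegative with sum $1$,
$$\|x - w_k(x)\| = \Big\|\sum_n \frac{b^k_n(x)}{S^k(x)}(x - m_n)\Big\| < 2^{-k-1},$$
as required; in particular $\|w_k(x) - w_{k+1}(x)\| < 2^{-k}$.

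For internal computability: $\rho$ and the $d(\cdot,m_n)$ are among the given data; $\max$ on $\R$ comes from the $Case$ operator; the scalars $2^{-n},2^{-k}$ and division by a positive real are among the admitted operations on $\R$. The real $S^k(x)$ and the vector $w_k(x)$ are each obtained as an effective limit of their partial sums, where a suitable truncation point is read off from a rational lower bound for $S^k(x)$ and a rational upper bound for $d(x,m_0)+\rho(x)$ — both computable from $x$ since $S^k(x) > 0$ — so that the tail of $w_k(x)$ is provably below $2^{-k}$. Forming $\{w_k(x)\}_k$ uses recursion, and $g(x) = ml(\{w_k(x)\}_k)$ applies the modified limit. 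Everything is built from the stated data together with $Case$, the vector operations on $M$, the selected operations on $\R$, recursion, and $ml$, so $g$ is internally computable in the required sense.

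The main obstacle is conceptual rather than computational: since $X$ is merely a metric space, there is no relation between $d_X(x,\cdot)$ on $M$ and the linear structure of $M$, so for $x \notin M$ one cannot make $d_X(x,w_k(x))$ small by averaging dense points, and the naive ``pick a nearly optimal $m_n$'' is neither continuous nor obviously useful. The resolution is to demand only what is actually needed — continuity of $g$ on all of $X$ and exactness on $M$ — using convexity of $M$ only near points of $M$. The remaining delicate point is to make $w_k(x)$ simultaneously continuous in $x$ (which forbids any sharp index cutoff and forces the soft weights $a^k_n$ and the normalisation by the positive function $S^k$) and internally computable (recovered by first computing $S^k(x)$ and a positive lower bound for it, and only then choosing how many terms of the sum to take).
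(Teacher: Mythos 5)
Your construction is sound and is recognizably a cousin of the paper's: both replace ``nearest point in the dense set'' by a soft probability distribution supported on those $m_n$ that come within a small slack of realizing $d(x,M)$, take the corresponding convex combination in $M$, and stitch the resulting approximations together with the modified limit operator. The genuine difference is where the finiteness lives. The paper weights only the finite initial segment $M_n=\{a_0,\dots,a_n\}$, using $d(x,M_n)+2^{-n}$ as the threshold, so each approximant is a finite convex combination; the price is that the quality of the level-$n$ approximant depends on how well $M_n$ already approximates $x$, and the bulk of the paper's proof is the delicate Case-and-inverse-function construction of an internally computable level selector $y_{n,x}$. You instead compare $d(x,m_n)$ directly to the given datum $\rho(x)=d(x,M)$, so every contributing point at stage $k$ is automatically $2^{-k-1}$-optimal and no level selection is needed; the price is an infinite convex combination, which you tame with the damping factors $2^{-n}$. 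This trade is legitimate and arguably cleaner, and your verification that $g$ is continuous, lands in $M$, and restricts to the identity on $M$ is correct.

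The one step that does not survive scrutiny is the internal computability of the infinite sums: you propose to read off a truncation point from ``a rational lower bound for $S^k(x)$ and a rational upper bound for $d(x,m_0)+\rho(x)$, both computable from $x$ since $S^k(x)>0$.'' Extracting such rational witnesses from a real is an inherently discontinuous selection (it amounts to deciding predicates like $S^k(x)>2^{-j}$), and every total function built in the paper's internal calculus is continuous; avoiding exactly this kind of case split is why the paper labours over the claim that $y_{n,x}$ is internally computable. Fortunately you do not need the witnesses. For $S^k(x)$ the partial sums are already fast converging, since $0\le b^k_n(x)\le 2^{-n-k-1}\le 2^{-n}$, so $S^k(x)$ is a direct modified limit of its partial sums. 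For the vector sum, first divide through by the internally computable real $C(x)=1+d(x,m_0)+\rho(x)+2^{-k-1}$, which dominates $\|m_n-m_0\|$ for every contributing $n$; the rescaled partial sums $u_N$ then satisfy $\|u_{N+1}-u_N\|\le 2^{-N}$, so their modified limit exists internally, and $w_k(x)$ is recovered by multiplying back by $C(x)$ and dividing by $S^k(x)$. (Division by a positive real is on the same footing here as in the paper's own proof, via Remark~\ref{mod}.) With that repair your argument is complete.
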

\begin{proof}
We will first define $g$. Then we will use elementary means to define a sequence $\{g_n\}_{n \in \N}$ such that composition with modified limits gives us $g$ as the limit.

We use the machinery of {\em probabilistic projections} from \cite{D3}, but our treatment is self contained.

For nonnegative reals $x$ and $y$ we let $$x \dminus y = \max\{x-y,0\}.$$
Let $\{a_n\}_{n \in \N}$ be an enumeration of a dense subset of $M$ and let $M_n = \{a_0 , \ldots , a_n\}$.
For each $n \in \N$ and each $x \in X$ we define the probability distribution $\mu_{n,x}$ on $M_n$ by
$$\mu_{n,x}(a) = \frac{(d(x,M_n) + 2^{-n}) \dminus d(x,a)}{\sum_{b \in M_n}((d(x,M_n) + 2^{-n}) \dminus d(x,b))}$$ and we let $$f_n(x) = \sum_{a \in M_n}\mu_{n,x}(a)\cdot a.$$
If $y = n + \lambda$ where $\lambda \in [0,1)$, we let
$$d(x,M_y) = (1-\lambda)d(x,M_n) + \lambda d(x,M_{n+1})$$ and we let $$f_y(x) = (1-\lambda)f_n(x) + \lambda f_{n+1}(x).$$
(We have not defined $M_y$, only interpreted the expression $d(x,M_y)$.)
We let 
\[
g(x)=\begin{cases} x&\mbox{\phantom{where $d(x,M) = 2^{-y}$} if $x \in M$}\\
              f_y(x)&\mbox{where $d(x,M) = 2^{-y}$ if $x \not \in M$.}
     \end{cases}
\]
We will show that $g$ is the limit of the fast converging (and thereby uniformly converging) sequence $\{g_n\}_{n \in \N}$, that in addition will be internally computable. Thus $g$ itself will be internally computable.

We have designed the probability distributions $\mu_{n,x}$ in such a way that if $a$ is an element in $M_n$ with minimal distance to $x$, and $b$ is further away from $x$ than $2^{-n} + d(a,x)$, then $\mu_{n,x}(b) = 0$. If, in addition $d(a,x) \leq 2^{-n}$, we know from the triangle inequality that if $\mu_{n,x}(b) > 0$ and $\mu_{n,x}(c) > 0$, then $||b - c|| \leq 2^{2-n}$. We will use this observation when verifying that $\{g_n\}_{n \in \N}$ is fast converging.
Let $$h_x(y) = d(x,M_y) + 2^{-(y+4)}.$$
Then $h_x$ is strictly decreasing, and $$d(x,M) = \lim_{y \rinf}h_x(y).$$
Let $n$ and $x$ be fixed. Let $y = y_{n,x}$ be minimal such that $d(x,M) = 2^{-y}$ or such that $$(h_x(y-1) \leq 2^{-(n+3)}) \wedge (y \geq n+3).$$
If $d(x,M) > 0$ there will be some $y$ satisfying the first option while if $d(x,M) = 0$ there is a $y$ that will satisfy the second option. For each option, $y$ is unique.\bigskip

\noindent{\em Claim.}\newline
$y_{n,x}$ is internally computable from $x$ and $n$.\medskip

\noindent{\em Proof of Claim.}\newline 
We use the computability of inverse functions (see Remark \ref{mod}) and a nested application of the Case operator.
We split between the cases $d(x,M) \geq 2^{-(n+5)}$ and $d(x,M) \leq 2^{-(n+5)}$.
If $d(x,M) \geq 2^{-(n+5)}$, we find $y \leq n+5$ such that  $d(x,M) = 2^{-y}$ and then  check, formally using the Case operator, between the two subcases
\begin{enumerate}[(1)]
\item $(h_x(y-1) \leq 2^{-(n+3)}) \wedge (y \geq n+3)$
\item $(h_x(y-1) \geq 2^{-(n+3)}) \vee (y \leq n+3)$
\end{enumerate}
In subcase (2) we have that $y_{n,x} = y$ while in subcase (1) we use that $h_x$ is strictly monotone in order to compute $y_{n,x}$.

If $d(x,M) \leq 2^{-(n+5)}$ we know that there is a $y$ such that $h_x(y-1) \leq 2^{-(n+3)}$, and we can computably find that $y$. This information can be used to isolate the exact value of $y_{n,x}$ in analogy with the first case.
This ends the proof of the claim.

Let $$g_n(x) = f_{y_{n,x}}(x).$$
$g_n$ is clearly  internally computable uniformly in $n$. We will prove that the sequence $\{g_n(x)\}_{n \in \N}$ is fast converging. 

Let $n \in \N$ and $x \in X$ be fixed. Let $y = y_{n,x}$.
If $d(x,M) = 2^{-y}$ then $y = y_{m,x}$ when $n \leq m$, and $g_n(x) = g_m(x)$ for $m \geq n$.
If $d(x,M) < 2^{-y}$ we have that $$m \geq n \Rightarrow y_{m,x} \geq y_{n,x}.$$
Let $y = k + \lambda$ where $\lambda \in [0,1)$. Since
$$h_x(y-1) \leq 2^{-(n+3)}$$ we in particular have that
$$d(x,M_{y-1}) \leq 2^{-(n+3)}$$  and we have that$$d(x,M_{k+1}) \leq d(x,M_k) \leq d(x,M_{y-1}) \leq 2^{-(n+3)}.$$
Since $y \geq n+3$, we have that $k > n+2$. Then,  if $\mu_{k,x}(b) > 0$ we have that $d(x,b) < 2^{-(n+1)}$, and thus $f_y(x)$ can be seen as a convex combination of vectors that have distances $\leq 2^{-(n+1)}$ to $x$.

The same argument is valid for $f_{y_{m,x}}(x)$ when $n \leq m$. Thus, any two vectors in the two sums with positive coefficients will have distance $\leq 2^{-n}$. It follows that $$||g_n(x) - g_m(x)|| \leq 2^{-n}$$ when $m \geq n$.\end{proof}
\subsection{An effective embedding theorem}
The Urysohn space $U$ was introduced by P. Urysohn \cite{U1,U2}. $U$ is, by construction, a computable metric space. $U$ is characterized up to isometry by being complete, separable and satisfying the following homogeneity property:
\begin{enumerate}[$-$]
\item If $A \subseteq B$ are two finite metric spaces and $f:A \rightarrow U$ is an isometric embedding, then $f$ can be extended to an isometric embedding $g:B \rightarrow U$.
\end{enumerate}\medskip

\noindent A model theorist might use the expression {\em $\omega$-saturated} for this property.
The classical construction of $U$ is carried out in two steps:
\begin{enumerate}[$-$]
\item We construct a computable metric $d_0$ on $\N$ with values in
  $\Q$ such that this space satisfies the homogeneity property with
  respect to $\Q$-valued metric spaces. Let $U_0$ be this space. We
  say that $U_0$ is {\em $\Q$-homogenous}.\medskip

\item We let $(U,d)$ be the completion of $U_0$ and prove the homogeneity property in full generality.
\end{enumerate}\medskip

\noindent Le\v{s}nik \cite{DL} gave an analysis of the Urysohn space from the point of view of \emph{constructive analysis}, and he proved that embeddability of separable metric spaces into the Urysohn space holds constructivly. Our next lemma is a related result for computational analysis, making it a point that the image of the embedding is computationally closed.
\begin{lem}\label{Ur}
Let $X$ be a computable metric space. Then $X$ is computably isometric to a computably closed subset of $U$.
\end{lem}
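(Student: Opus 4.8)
The strategy is to build the isometric embedding of $X$ into $U$ by the classical back-and-forth construction that produces $U$ in the first place, but carried out effectively so that the image becomes a computably closed subset. Let $X$ be the completion of a computable metric $d$ on the enumerated dense set $\{x_n\mid n\in\N\}$. I would first handle the $\Q$-valued core: approximate $d$ from above and below to rational precision and use the $\Q$-homogeneity of $U_0$ to embed, one point at a time, the countable set $\{x_n\}$ into $U_0$. Concretely, having placed $x_0,\dots,x_{n-1}$ at points $u_0,\dots,u_{n-1}\in U_0$ with $d_0(u_i,u_j)$ matching $d(x_i,x_j)$ up to a controlled error, I use the homogeneity property of $U_0$ with respect to the two-point extension $\{u_0,\dots,u_{n-1}\}\subseteq\{u_0,\dots,u_{n-1},\text{new point}\}$ to find $u_n$; since $U_0$ is computable and $\Q$-homogeneous, this search is effective. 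Taking completions then gives an isometric embedding $\iota:X\to U$ whose restriction to the dense set is computable, hence $\iota$ itself is computable.

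**Making the image computably closed.** This is the real content of the lemma beyond the classical embedding theorem, and it is where I expect the main work to lie. I need two things: that $\iota[X]$ is the closure of a computably enumerable subset of $U$, and that $u\mapsto d_U(u,\iota[X])$ is computable. The first is immediate — $\{u_n\}_{n\in\N}$ is a computable sequence dense in $\iota[X]$. For the distance function, the plan is to exploit the homogeneity/universality of $U$ in the reverse direction: for any point $u\in U$ and any $\varepsilon>0$, the distance $d_U(u,\iota[X])$ should be computable by examining, for larger and larger finite subsets $\{u_0,\dots,u_{n-1}\}$ of the image, how $u$ relates to them, together with the fact that $X$ is \emph{complete} so that a point of $\iota[X]$ near $u$ can be found as a fast-converging sequence from $\{x_n\}$. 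The upper bound on $d_U(u,\iota[X])$ is easy: compute $d_U(u,u_n)$ for $n=0,1,2,\dots$ and take the infimum. The lower bound is the obstacle: I must certify that $u$ is \emph{not} closer than $r$ to $\iota[X]$, and naively this quantifies over all of $X$.

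**Resolving the lower bound.** To get the lower bound effectively I would use that a point of $X$ at distance $<r$ from $u$, if it exists, can be approximated by some $x_n$ at distance $<r+2^{-k}$ from $u$ for every $k$; conversely, by homogeneity of $U$ (extend the finite configuration $\{u_0,\dots,u_{n-1},u\}$), any finite pattern of distances that is metrically consistent is actually realized in $U$, so the only constraint on ``is there a point of $\iota[X]$ near $u$'' is a constraint internal to the metric structure of $X$ together with the single number $d_U$ restricted to the configuration. Working this out, $d_U(u,\iota[X])$ equals the infimum over $n$ of $d_U(u,u_n)$ — i.e. there is no ``hidden'' point of the completion strictly closer to $u$ than every $u_n$ is — because if $u'\in\iota[X]$ with $d_U(u,u')=s$, then $u'$ is a limit of $u_n$'s, so $d_U(u,u_n)\to s$. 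Hence the distance function is simply $u\mapsto\inf_n d_U(u,u_n)$, which is computable by the standard argument (a countable infimum of computable reals with a computable rate coming from density). So in fact the lower bound is not genuinely problematic once one observes that density of $\{u_n\}$ in $\iota[X]$ forces the infimum to be attained in the limit; the care needed is only in checking that the whole construction of $\iota$ — the nested homogeneity searches in $U_0$ and the passage to completions — is uniformly computable and respects the $2^{-n}$ error bookkeeping. I would organize the write-up as: (i) the effective embedding into $U_0$ by induction with rational error control; (ii) passage to completions yielding a computable isometry $\iota$; (iii) the observation that $\iota[X]=\overline{\{u_n\}}$ and $d_U(\cdot,\iota[X])=\inf_n d_U(\cdot,u_n)$ is computable.
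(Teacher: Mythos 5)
Your overall architecture (effective back-and-forth into $U_0$, pass to completions, then verify computable closedness) matches the paper's, and your identification of the lower bound on $d_U(u,\iota[X])$ as ``the obstacle'' is exactly right. The problem is that your resolution of that obstacle does not work. You observe that $d_U(u,\iota[X])=\inf_n d_U(u,u_n)$ because $\{u_n\}$ is dense in the image --- true, but trivial --- and then assert that this infimum is ``computable by the standard argument (a countable infimum of computable reals with a computable rate coming from density).'' There is no such rate. The infimum of a computable sequence of reals is in general only semicomputable from above: enumerating $d_U(u,u_0),d_U(u,u_1),\dots$ gives better and better upper bounds, but at no finite stage can you certify that some later $u_m$ will not come much closer to $u$. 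Density of $\{u_n\}$ in $\iota[X]$ gives a modulus for approximating points \emph{of the image}, not a modulus for the convergence of $\min_{m\le n} d_U(u,u_m)$ to the infimum. Indeed, if the distance function to the closure of a computably enumerable set were automatically computable, the second clause in the paper's definition of ``computably closed'' would be redundant; it is not, and making it hold is the entire content of the lemma beyond the classical embedding theorem.

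The paper closes this gap not by analyzing an arbitrary embedding after the fact, but by \emph{steering the construction}: at stage $n$ it records, for each $u_j$ with $j\le n$, a rational ``protection radius'' $r_{j,n}$ witnessing how far $u_j$ currently is from the partial image, and then uses the fact that $\{u\in U_0 \mid \bigwedge_i d_U(u,u_{k_i})\ge r_i\}$ is again $\Q$-homogeneous and computably isometric to $U_0$ (the paper's Claim~1) to choose the next image point $f(n)$, and all its approximants $v_{n,m}$, \emph{outside} the protected neighborhoods. This guarantees, at every finite stage, a certified lower bound on $d_U(u_j,\iota[X])$ that converges to the true distance with a known rate ($2^{-(m-2)}$ at step $m$), which is what computability of the distance function requires. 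Your appeal to homogeneity of $U$ in the reverse direction cannot substitute for this: homogeneity tells you which configurations \emph{can} be realized in $U$, not which ones the particular embedding you built actually realizes, so it yields no lower bound on the distance to the image. To repair the proof you would need to incorporate some such avoidance mechanism into step (i) of your outline.
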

\begin{proof}
Without loss of generality, we assume that $X $ is the completion of $ \N$ with a computable pseudometric $d$ on $\N$, meaning that $d$ satisfies symmetry and the triangle inequality, but not necessarily that $n \neq m$ implies that $d(n,m) > 0$. In order to avoid confusing terminology, we let $U$ be the completion of the $\Q$-homogeneous and  computable metric on $\{u_k \mid k \in \N\}$.
\medskip

\noindent {\em Claim 1}\newline
Let $k_1 , \ldots , k_n \in \N$ and $r_1 , \ldots , r_n \in \Q_{\geq 0}$. Then $$\{u_k \mid \bigwedge_{i = 1}^n d_U(u_k,u_{k_i}) \geq r_i\}$$ is also $\Q$-homogeneous, and computably isometric to $U_0$.

The first part is implicit in Urysohn's original paper, and the second part follows by a straightforward back-and-forth construction.
Of course, $r_i = 0$ represents no restriction, and we include this case only for expositional reasons.
\medskip

\noindent{\em Claim 2}\newline
Let $A \subseteq B$ be finite, computable metric spaces and let $f:A \rightarrow U$ be a computable embedding. Then, uniformly in $A$, $B$ and $f$ there is a computable embedding $g:B \rightarrow U$ that extends $A$.

In order to prove this claim, it is sufficient to do so for one-step extensions. Then the original proof of Urysohn is sufficiently constructive to give us this.

We construct an isometric map $f:(\N,d_0) \rightarrow U$ by recursion on $n$, and simultaneously we identify for each $n$ a radius $r_{i,n} \in \Q$ for $i = 0 , \ldots , n$ protecting a neighborhood of $u_i$ from being hit by the completion of $f$, as follows:
\begin{enumerate}[$-$]
\item Assume that $f(i)$ is defined as the limit of the fast converging sequence $\{v_{i,m}\}_{m \in \N}$ from $U_0$ for $i = 0 , \ldots , n-1$.
For $j = 0 , \ldots , n$ we let $r_{j,n} = k\cdot2^{-n}$ where $k$ is maximal such that 
$$d(u_j,v_{i,n}) \geq (k+2)\cdot2^{-n}$$ for all $i = 0 , \ldots , n-1$, if there is such $k \in \N$. If there is no such $k$, we let $r_{j,n} = 0$.
Then we also know   that $d(u_j,v_{i,m}) \geq r_{j,n}$ when $j \leq
n-1$ for $m > n$.\medskip

\item We use the algorithm from Claim 2 combined with Claim 1 to find $f(n)$ in $U$ as the limit of the fast converging sequence $\{v_{n,m}\}_{m \in \N}$  such that $d_U(v_{n,m},u_j) \geq r_{j,n}$ for all $j = 0 , \ldots , n$ and $m \in \N$.
\end{enumerate}
Using the recursion theorem, these constructions can be unified into one computable construction.

Now, $f$ extends to an isometric map $g:X \rightarrow U$.
In this construction, at a step $m \geq n$ we approximate the distance from $u_n$ to the image of $g$ with a precision of $2^{-(m-2)}$. Thus the image of $g$ will be computably closed. \end{proof}
We may draw a simple consequence out of Theorem \ref{theorem.retract} and Lemma \ref{Ur}:
\begin{cor}
Let $\sigma(\vec X)$ be a finite type in the type variables $\vec X$ and let $\vec M$ be a sequence of computable Banach spaces. Let $U$ be the Urysohn space. Then $\sigma(\vec M)$ is a retract of $\sigma(\vec U)$, where the embedding and inverse both are internally computable.\end{cor}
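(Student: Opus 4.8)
The plan is to obtain the retract at base types by combining Lemma~\ref{Ur} with Theorem~\ref{theorem.retract}, and then to lift it through the type structure by a structural induction on $\sigma$, checking at each step that the embedding and its inverse stay internally computable.

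\emph{Base case.} Fix a type variable $X_i$ and the associated computable Banach space $M_i$. By Lemma~\ref{Ur}, $M_i$ is computably isometric, via some $e_i$, to a computably closed subset $M_i'$ of $U$; transporting the Banach space structure of $M_i$ along $e_i$ makes $M_i'$ a computable Banach space with $d(\cdot,M_i')$ computable, and $e_i$ a computable isomorphism. Since $M_i'$, with the metric induced from $U$, is then a closed subspace of $U$ that is also a Banach space, Theorem~\ref{theorem.retract} applies to $M_i'\subset U$ and yields a continuous, internally computable $g_i:U\to M_i'$ that is the identity on $M_i'$. The embedding $e_i$ is itself internally computable: the construction in the proof of Lemma~\ref{Ur} is a single recursion (unified by the recursion theorem) whose only non-elementary ingredient is the formation of limits of fast converging sequences in $U$, i.e.\ the modified limit operator; and $e_i^{-1}$ restricted to $M_i'$ is internally computable because $e_i$ is a computable isometry onto a computably closed set. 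Put $\iota_i=e_i:M_i\to U$ and $\rho_i=e_i^{-1}\circ g_i:U\to M_i$. Then $\rho_i\circ\iota_i=\mathrm{id}_{M_i}$ and $\iota_i\circ\rho_i$ is the retraction of $U$ onto the image of $M_i$, so $M_i$ is an internally computable retract of $U$.

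\emph{Induction step.} Both type constructors are functorial for retract pairs. If $\sigma=\sigma_1\times\sigma_2$ (when products are present), one takes the pair of the component embeddings and the pair of the component retractions; pairing and projection are basic internally computable operations, so nothing is lost. If $\sigma=\sigma_1\to\sigma_2$ and $(\iota^1,\rho^1)$, $(\iota^2,\rho^2)$ are the retract pairs supplied by the induction hypothesis for $\sigma_1,\sigma_2$, set
$$\Phi(f)=\iota^2\circ f\circ\rho^1,\qquad \Psi(h)=\rho^2\circ h\circ\iota^1 .$$
Then $\Psi(\Phi(f))=\rho^2\circ\iota^2\circ f\circ\rho^1\circ\iota^1=f$ by the induction hypothesis, so $\sigma(\vec M)$ is a retract of $\sigma(\vec U)$; continuity of $\Phi$ and $\Psi$ is the standard functoriality of the function-space constructor in the ambient cartesian closed category, so this is a genuine topological retract, which is exactly the effective embedding theorem announced after \cite{D3}.

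\emph{Main obstacle.} The combinatorics is routine; the point that needs care is that the higher-type operations used above — composition, pairing, projection, and especially $f\mapsto\iota^2\circ f\circ\rho^1$ at a function type — are legitimately internally computable. This is a statement about the design of the language generating the internally computable functions (cf.\ Definition~\ref{def.int.comp} and the typed setting inherited from \cite{D1,D2,D3}): it must be a typed structure closed under composition and the cartesian closed operations. Granting that, the only genuinely substantive input is the base case, where one uses Theorem~\ref{theorem.retract} for the retraction and checks, as above, that both the Urysohn embedding of Lemma~\ref{Ur} and the inverse of the isometry onto its computably closed image are internally computable.
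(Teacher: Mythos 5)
Your proposal is correct and follows essentially the same route as the paper: the base case combines Lemma~\ref{Ur} with Theorem~\ref{theorem.retract} to make each $M_i$ an internally computable retract of $U$, and the induction step lifts the retract pairs through the arrow constructor via $f\mapsto\iota^2\circ f\circ\rho^1$ and $h\mapsto\rho^2\circ h\circ\iota^1$, exactly as the paper's (much terser) proof intends. Your extra care about the internal computability of the Urysohn embedding and of the higher-type operations is a reasonable elaboration of points the paper leaves implicit, not a divergence in method.
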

\begin{proof}
If $\sigma$ is one of the base types, this is a consequence of Lemma \ref{Ur} and Theorem \ref{theorem.retract}, while if $\sigma = \delta \rightarrow \tau$ this follows immediately from the assumption that the corollary holds for $\delta$ and $\tau$.\end{proof}
\begin{rem}A similar result was proved in \cite{D3}. There we replace $\vec M$ with a sequence $\vec A$ of complete, separable metric spaces, but we then only obtain topological embeddings for each type, and even if $\vec A$ are computable, these embeddings will not be computable in general.\end{rem}

\subsection{The accumulation operator}
In this subsection, we will introduce one computable operator $\bar \alpha$ on Banach spaces. We call it the {\em accumulation operator}, and we think of it as a generalization of the $\mu$-operator. Like the $\mu$-operator, $\bar \alpha$ will be partial. We will prove that every externally computable function from a computable metric space to a computable Banach space will be internally computable relative to $\bar \alpha$.
\begin{defi}
Let $M$ be a Banach space, let $f:\N \rightarrow \R_{\geq 0}$ and $g:\N \rightarrow M$.
We define $\bar \alpha(f,g) = v$ if for some $n$ and $\lambda$ we have that
$$\sum_{i = 0}^{n-1}f(i) \leq 1, \;\;\sum_{i = 0}^nf(i) > 1,\;\;\lambda = 1-\sum_{i = 0}^{n-1}f(i)$$
and $$v = \sum_{i = 0}^{n-1}f(i) g(i) + \lambda  g(n).$$

\noindent We let $\bar \alpha(f,g) = g(0)$ if $f(0) > 1$.
%\newline
We let $\bar \alpha(f,g)$ be undefined if $\sum_{i = 0}^\infty f(i) \leq 1$.
%\newline
We call $\bar \alpha$ {\em the accumulation operator}.
\end{defi}

We will let $\bar \alpha(f,g) \!\!\downarrow$ mean that $\bar \alpha (f,g) = v$ for some $v \in M$, and $\bar \alpha (f,g) \!\!\uparrow$ mean that it is undefined.
\begin{lem} \label{alphacont} If $f = \lim_{k \rinf}f_k$, $g = \lim_{k \rinf}g_k$, $\bar \alpha(f,g)\!\!\downarrow$ and $\bar \alpha(f_k,g_k)\!\!\downarrow$ for each $k \in \N$, we have that $$\bar \alpha(f,g) = \lim_{k \rinf}\bar \alpha(f_k,g_k).$$
\end{lem}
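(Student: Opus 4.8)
The plan is to rewrite $\bar\alpha(f,g)$ as a \emph{finite} sum $\sum_{i=0}^{n}c_i(f)\,g(i)$ in which the coefficients depend continuously on finitely many values of $f$, and in which the length $n$ of the sum can be chosen uniformly for $f$ and for all $f_k$ with $k$ large. Recall that $f=\lim_k f_k$ and $g=\lim_k g_k$ in particular yield $f_k(i)\to f(i)$ in $\R$ and $g_k(i)\to g(i)$ in $M$ for each fixed $i$; this is all we will use. Write $S_f(j)=\sum_{i=0}^{j-1}f(i)$, with $S_f(0)=0$, and set
$$c_i(f)=\min(1,S_f(i+1))-\min(1,S_f(i)).$$
Since $S_f$ is non-decreasing, $c_i(f)\geq 0$, and $c_i(f)=0$ whenever $S_f(i)\geq 1$.

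Next I would observe that $\bar\alpha(f,g)\!\!\downarrow$ holds exactly when $\sum_{i=0}^{\infty}f(i)>1$, and in that case, letting $n$ be the least index with $S_f(n+1)>1$, a direct inspection of the definition of $\bar\alpha$ — distinguishing $i<n$ (coefficient $f(i)=c_i(f)$), $i=n$ (coefficient $\lambda=1-S_f(n)=c_n(f)$) and $i>n$ (coefficient $0=c_i(f)$) — gives
$$\bar\alpha(f,g)=\sum_{i=0}^{n}c_i(f)\,g(i);$$
the special clause $\bar\alpha(f,g)=g(0)$ for $f(0)>1$ is just the case $n=0$, $c_0(f)=1$. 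The same reformulation applies to each pair $(f_k,g_k)$: if $n_k$ is least with $S_{f_k}(n_k+1)>1$, then $\bar\alpha(f_k,g_k)=\sum_{i=0}^{n_k}c_i(f_k)\,g_k(i)$.

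The key step is to make the index uniform. Since $\bar\alpha(f,g)\!\!\downarrow$, we have $S_f(n+1)>1$, and because $S_{f_k}(n+1)=\sum_{i=0}^{n}f_k(i)$ is a finite sum of sequences converging to $f(0),\dots,f(n)$, we get $S_{f_k}(n+1)\to S_f(n+1)>1$, so $S_{f_k}(n+1)>1$ for all sufficiently large $k$. For such $k$ the minimality of $n_k$ forces $n_k\leq n$, and hence, padding with the vanishing coefficients $c_i(f_k)=0$ for $n_k<i\leq n$,
$$\bar\alpha(f_k,g_k)=\sum_{i=0}^{n}c_i(f_k)\,g_k(i),$$
a sum over the \emph{same} finite index set $\{0,\dots,n\}$ as $\bar\alpha(f,g)$. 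Now for each fixed $i\leq n$ one has $S_{f_k}(i)\to S_f(i)$ and $S_{f_k}(i+1)\to S_f(i+1)$, so $c_i(f_k)\to c_i(f)$ by continuity of $t\mapsto\min(1,t)$, while $g_k(i)\to g(i)$ by hypothesis; thus $c_i(f_k)\,g_k(i)\to c_i(f)\,g(i)$, and summing the $n+1$ terms yields $\bar\alpha(f_k,g_k)\to\bar\alpha(f,g)$, as required.

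The main obstacle is precisely the uniformity obtained in the third step. One cannot argue term by term directly on the series $\sum_{i=0}^{\infty}c_i(f_k)\,g_k(i)$: although $\sum_i c_i(f_k)=1$, the vectors $g_k(i)$ are only assumed to converge for each \emph{fixed} $i$ and need not be bounded uniformly in $i$, so without further information the tail of the sum is not controlled. It is the \emph{strict} inequality $\sum_i f(i)>1$ built into $\bar\alpha(f,g)\!\!\downarrow$ that rescues the argument: it forces every $f_k$ with $k$ large to reach total mass exceeding $1$ no later than index $n$, collapsing $\bar\alpha(f_k,g_k)$ to a fixed finite sum for which passage to the limit is routine.
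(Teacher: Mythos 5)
Your proof is correct. It rests on the same key observation as the paper's proof: since $\sum_{i=0}^{n}f(i)>1$ is a \emph{strict} inequality involving only finitely many values of $f$, one gets $\sum_{i=0}^{n}f_k(i)>1$ for all large $k$, hence $n_k\leq n$, which collapses everything to a fixed finite index set. But your execution is genuinely different and, I think, cleaner. The paper, after establishing $n_k\leq n$, reduces without loss of generality to the case where $n_k$ is a constant $m\leq n$ (implicitly a decomposition into finitely many subsequences) and then splits into the cases $m=n$ and $m<n$; the second case requires the separate observations that $\sum_{i=0}^{m}f(i)=1$, that $f(m+1)=\cdots=f(n-1)=0$, that $\lambda=0$, and that $f(m)=\lim_k\lambda_k$. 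Your closed-form coefficients $c_i(f)=\min(1,S_f(i+1))-\min(1,S_f(i))$ absorb all of that bookkeeping into the continuity of $t\mapsto\min(1,t)$: the formula $\bar\alpha(f,g)=\sum_{i=0}^{n}c_i(f)\,g(i)$ is valid regardless of where the partial sums cross $1$ or whether they touch $1$ exactly, so no case analysis and no subsequence extraction are needed. What the paper's version buys is that it works directly with the quantities $n$, $n_k$, $\lambda$, $\lambda_k$ appearing in the definition of $\bar\alpha$, which makes it easy to see what happens operationally; what yours buys is a single manifestly continuous expression on a fixed finite coordinate patch, which also makes transparent exactly where definedness of $\bar\alpha(f,g)$ (i.e.\ the strict inequality) is used. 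Your closing remark correctly identifies why a naive term-by-term argument on the infinite series would fail.
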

\proof
Let $n$, $n_k$, $\lambda$ and $\lambda_k$ be as in the definition of $\bar \alpha$.
Then $$\sum_{i = 0}^n f(i) > 1$$ so for some $k_0$ we have that $\sum_{i = 0}^n f_k(i) > 0$ whenever $k \geq k_0$.
Then $n_k \leq n$ for all $k \geq k_0$, and we may, without loss of generality, assume that $n_k$ takes a fixed value $m \leq n$ for all $k$. The proof now splits into two cases:\medskip

\noindent{\em Case 1}\newline
 $m = n$.
Then $\lambda = \lim_{k \rinf}\lambda_k$ and  clearly $\bar \alpha(f,g) = \lim_{k \rinf}\bar \alpha(f_k,g_k)$.\medskip

\noindent{\em Case 2} \newline
$m < n$.
Then $\sum_{i = 0}^m f_k(i) > 1$ for all $k$, and $\sum_{i = 0}^m f(i) \leq 1$, so $\sum_{i = 0}^mf(i) = 1$.
It follows that $$f(m+1) = \cdots = f(n-1) = 0$$ and that $\lambda = 0$. Then
$$\bar \alpha(f,g) = \sum_{i = 0}^mf(i)\cdot g(i) = \lim_{k \rinf}\sum_{i = 0}^m f_k(i)\cdot g_k(i).$$
Since we also must have that $f(m) = \lim_{k \rinf}\lambda_k$ and that 
$f(m) = \lim_{k \rinf}f_k(m)$, we have that
$$\eqalign{
\lim_{k \rinf}\bar \alpha(f_k,g_k) 
&= \lim_{k \rinf}(\sum_{i =  0}^{m-1}f_k(i)\cdot g_k(i) + \lambda_k g_k(m))\cr
&= \lim_{k \rinf}(\sum_{i = 0}^{m-1}f_k(i)\cdot g_k(i) + f_k(m) \cdot
g_k(m)) = \bar \alpha(f,g).\rlap{\hbox to 61 pt{\hfill\qEd}}}$$

\begin{lem}\label{alpha.comp} $\bar \alpha$ is externally computable.
\end{lem}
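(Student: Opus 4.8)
The plan is to rewrite $\bar\alpha(f,g)$ as a \emph{finite} sum $\sum_{i=0}^{N} c_i(f)\, g(i)$, valid for \emph{any} $N$ large enough that $\sum_{i=0}^{N} f(i) > 1$, where the coefficients $c_i(f)$ depend computably (indeed continuously) on $f$ and are automatically supported on an initial segment. The key realization is that, although the ``breaking index'' $n_0$ cannot be decided, it need not be: one can compute the coefficients directly, and they vanish past $n_0$ on their own.

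Write $S_{-1}=0$ and $S_k=\sum_{i=0}^{k}f(i)$, and let $n_0$ be the least $k$ with $S_k>1$; such a $k$ exists exactly when $(f,g)$ is in the domain of $\bar\alpha$, i.e.\ when $\sum_i f(i)>1$. Put
$$c_i = c_i(f) = \max\bigl(0,\ \min\bigl(f(i),\ 1-S_{i-1}\bigr)\bigr).$$
A three-case check ($i<n_0$, $i=n_0$, $i>n_0$), using only $f(i)\ge 0$ and the monotonicity of the partial sums $S_k$, gives $c_i=f(i)$ for $i<n_0$, $c_{n_0}=1-S_{n_0-1}=\lambda$, and $c_i=0$ for $i>n_0$; hence $0\le c_i\le f(i)$, $\sum_i c_i=1$, and $\bar\alpha(f,g)=\sum_{i=0}^{n_0}c_i\,g(i)$. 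Since the coefficients past $n_0$ are zero, for every $N\ge n_0$ we still have $\sum_{i=0}^{N}c_i=1$ and $\bar\alpha(f,g)=\sum_{i=0}^{N}c_i\,g(i)$. The special clause $f(0)>1\Rightarrow\bar\alpha(f,g)=g(0)$ is just the case $n_0=0$, $c_0=1$, consistent with this formula.

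The algorithm is then: given a representation of a pair $(f,g)$ with $\bar\alpha(f,g)\!\!\downarrow$, first dovetail over $N$ and over precision to find some $N$ for which a computed lower approximation of $\sum_{i=0}^{N}f(i)$ exceeds $1$ --- this search halts precisely because $\sum_i f(i)>1$. From the representation read off rational bounds $q_i\ge f(i)\ge c_i$ and $r_i\ge\|g(i)\|$ for $i\le N$. Given a target precision $2^{-m}$, compute a rational $\tilde c_i$ within $\delta$ of $c_i$ (possible because $c_i$ is built from $f(0),\ldots,f(i)$ using $+,-,\min,\max$, all computable) and a point $\tilde g_i$ of the given dense subset of $M$ within $\delta'$ of $g(i)$, with $\delta,\delta'$ chosen from $m,N,(q_i),(r_i)$ small enough that $\bigl\|\sum_{i=0}^{N}c_i g(i)-\sum_{i=0}^{N}\tilde c_i\tilde g_i\bigr\|\le 2^{-m}$, by the triangle inequality together with the computable vector sum and scalar multiplication of $M$. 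A standard index shift turns these into a fast-converging sequence representing $\sum_{i=0}^{N}c_i\,g(i)=\bar\alpha(f,g)$. As the domain $\{(f,g):\sum_i f(i)>1\}$ is open, this exhibits $\bar\alpha$ as partial externally computable.

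The only substantive step is the identity $\bar\alpha(f,g)=\sum_{i=0}^{N}c_i(f)\,g(i)$ for arbitrary $N$ with $S_N>1$: it converts the feared discontinuity in the choice of $n_0$ into the evident continuity of the $c_i$ in $f$. Everything else --- the halting search (guaranteed on the domain of $\bar\alpha$) and the finite ``scalar times vector'' approximation --- is routine and parallels the discussion preceding the lemma.
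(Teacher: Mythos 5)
Your proof is correct, but it takes a genuinely different route from the paper. The paper confronts the undecidability of the breaking index $n$ head-on by running two semi-algorithms in parallel: one that succeeds when the partial sums pass $1$ strictly (so $n$ can be certified by an unbounded search), and one that succeeds when $\sum_{i=0}^{n-1}f(i)=1$ exactly, in which case it brackets $n$ between indices $m_1<n<n_1$ carrying total mass at most $2^{-(k+3)}/N$ (with $N$ a bound on the relevant $\|g(i)\|$) and accepts the ``educated guess'' $n=m_1$ with a controlled error. Your observation that $\bar\alpha(f,g)=\sum_{i=0}^{N}c_i(f)\,g(i)$ with $c_i=\max(0,\min(f(i),1-S_{i-1}))$, valid for every $N$ with $S_N>1$, dissolves the case distinction entirely: the coefficients are exact continuous (hence computable) functions of finitely many values of $f$, they vanish past the breaking index automatically, and in the delicate case $S_{n-1}=1$ the coefficient of $g(n)$ is exactly $0$ rather than approximately negligible. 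The only search left is for some $N$ with a certified lower bound $S_N>1$, which halts on the domain, and the remaining finite sum is approximated in the standard way using norm bounds on the $g(i)$. What your approach buys is a shorter, error-analysis-free argument that also makes the continuity of $\bar\alpha$ on its domain (the paper's Lemma \ref{alphacont}) essentially transparent; what the paper's parallel-algorithm approach buys is a template that transfers more directly to the operator $\alpha^\ast$ in the Epilogue, where the target is a probability distribution rather than a vector and no algebraic recombination of coefficients is available. Both proofs are sound; yours is arguably the cleaner one for $\bar\alpha$ itself.
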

\begin{proof}
Let $k \in \N$ and let $\hat f$ and $\hat g$ be external representatives for $f$ and $g$ resp. We will describe an algorithm that use $\hat f$ and $\hat g$ as oracles and that decides the value of $\bar \alpha(f,g)$ with a precision of $2^{-k}$.

First we use $\hat f$ and find $\hat n \in \N$ such that
$$\sum_{i = 0}^{\hat n}f(i) > 1.$$
Let $n$ and $\lambda$ be as in the definition of $\bar \alpha(f,g)$. Then we know that $n \leq \hat n$, but we cannot in general compute $n$ from $\hat f$. In order to compute $\bar \alpha(f,g)$ we will run two algorithms in parallel, both giving a valid answer when terminating, and choose the algorithm terminating first. In all cases, at least one of the two will terminate.
\medskip

\noindent{\em Algorithm 1}\newline
This algorithm will work when $n = 0$ as well as when $\sum_{i = 0}^{n-1}f(i) < 1$ and $\sum_{i = 0}^nf(i) > 1$.

We search for an $n \leq \hat n$ and verifications of  $\sum_{i = 0}^{n-1}f(i) < 1$ and $\sum_{i = 0}^nf(i)> 1$ and then compute
$$\sum_{i = 0}^{n-1}f(i)\cdot g(i) + (1-\sum_{i = 0}^{n-1}f(i))\cdot g(n)$$with the desired precision. This can be done using $\hat f$ and $\hat g$.
\medskip

\noindent{\em Algorithm 2}\newline
This will work when $n > 0$ and $\sum_{i = 0}^{n-1}f(i) = 1$.

First we use $\hat g$ to find an integer $N$ that will be an upper bound for $$\{||g(0)||, \ldots , ||g(\hat n)||\}.$$
We then find $m_1$ and $n_1$ such that $m_1 < n_1 \leq \hat n$ and such that
\begin{enumerate}[i)]
\item $m_1 = 0$ or $\sum_{i = 0}^{m_1 - 1}f(i) < 1$

\item $\sum_{i = 0}^{m_1}f(i) \in (1 - \frac{2^{-(k+3)}}{N},1+\frac{2^{-(k+3)}}{N})$

\item $\sum_{i = m_1}^{n_1 - 1}f(i) < \frac{2^{-(k+3)}}{N}$

\item $\sum_{i = 0}^{n_1}f(i) > 1.$

\end{enumerate}
Then we know that $$m_1 < n < n_1.$$
The point is that if we make the educated guess that $n = m_1 $, then the error we make is bounded by $2^{-(k+1)}$. Then, when we compute $$\sum_{i = 0}^{m_1 - 1}f(i) \cdot g(i) + (1 - \sum_{i = 0}^{m_1 - 1} f(i))\cdot g(m_1)$$ with a precision of $2^{-(k+1)}$ we have computed $\bar \alpha(f,g)$ with a precision of $2^{-k}$.
\medskip

This ends our description of the algorithm and the proof of the lemma.\end{proof}
Given computable metric spaces $X_1 , \ldots , X_n$ and computable Banach spaces $M_1, \ldots , M_m$ we may now give a precise definition of what we mean by being {\em internally computable}. In order to avoid too much notation, we restrict the definition to partial functions between cartesian products of four spaces, $\N$, $\R$, $X$ and $M$ where $X$ is an arbitrary computable metric space and $M$ is an arbitrary computable Banach space.

\begin{defi} \label{def.int.comp}
Let $(X, d_X)$ be a metric space given as the completion of the metric
on $\{x_n\mid n \in \N\}$ and let $(M,||\cdot||_M, +_M , \cdot_M)$ be
a separable Banach space that considered as a metric space is the
completion of the metric on $\{y_m\mid m \in \N\}$.

The set of {\em internally computable functions} from any product of the sets $\N$, $\R$, $X$ and $M$ to any of these sets is the smallest class of partial functions containing the basic functions and constants
\begin{enumerate}[$-$]
\item successor and constants on $\N$
\item the constant 1 on $\R$
\item addition,  subtraction, multiplication and the absolute value function on $\R$
\item $n \mapsto x_n$ and $d_X:X^2 \rightarrow \R$
\item $m \mapsto y_m$, $+_M:M^2 \rightarrow M$, $\cdot_M:\R \times M \rightarrow M$ and $||\cdot||_M:M \rightarrow \R$
\end{enumerate} and is closed under 
\begin{enumerate}[$-$]
\item composition
\item primitive recursion \item $\mu$-recursion 
($\mu$-recursion only makes sense over $\N$ but primitive recursion can be used to define infinite sequences in any of these sets)\item the Case operator
\item the modified limit operator for $\R$ and $M$\item the accumulation operator for $\R$ and $M$\end{enumerate}
where we allow for dummy arguments of all kinds in the basic functions.
\end{defi}

\begin{thm}\label{3.16}Let $X$ be a computable metric space and let $M$ be a computable Banach space. Let $f:X \rightarrow M$ be total and externally computable.
Then $f$ is internally computable.
\end{thm}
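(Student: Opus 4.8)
The plan is to simulate the external algorithm for $f$ internally. The one genuine difficulty is that passing from a point $x\in X$ to a representative $\gamma$ of $x$ is an irreducibly discontinuous operation, so it cannot itself be an internal step; the accumulation operator $\bar\alpha$ is exactly the device that bypasses this, by replacing ``run the algorithm on a representative of $x$'' with ``average the algorithm's verdicts over a continuously varying cloud of representatives of $x$''. The point of averaging over \emph{genuine} representatives, rather than approximate ones, is that this preserves fast convergence automatically: every representative of $x$ produces a representative of the \emph{same} point $f(x)$, so every verdict entering the average is equally close to $f(x)$, which lets us finish with a modified limit and never confront the (discontinuous) modulus of continuity of $f$. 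Concretely, since $f$ is externally computable it is represented, as in Subsection 3.2, by a total recursive $\hat F\colon Seq\to Seq$ that is $\prec$-increasing and such that whenever $\gamma$ represents $x\in X$ the sequences $\hat F(\gamma(0),\dots,\gamma(\ell-1))$, $\ell\in\N$, are $\prec$-increasing with limit a representative $F(\gamma)$ of $f(x)$; in particular their lengths tend to infinity. Identify $Seq$ with $\N$ by a fixed computable bijection $j\mapsto\sigma_j$; then $\hat F$ and the map $j\mapsto y_{\mathrm{ind}(\hat F(\sigma_j))}$ — where $\mathrm{ind}(\tau)$ is the last entry of $\tau$, or $0$ if $\tau$ is empty — are internally computable, the latter being a total recursive map $\N\to\N$ followed by $m\mapsto y_m$.

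\emph{A continuously varying cloud of representatives.} For $x\in X$ and a sequence number $\sigma=(k_0,\dots,k_{\ell-1})$ I would define a weight $\rho_x(\sigma)\ge0$ as a product of $\dminus$-factors of the shape $(2^{-(i+1)}+\varepsilon_i)\dminus d_X(x_{k_i},x_{k_{i+1}})$ for $i<\ell-1$ and $(2^{-(\ell+1)}+\varepsilon_\ell)\dminus d_X(x,x_{k_{\ell-1}})$, times a fixed summable damping factor such as $8^{-\ell}\prod_{i<\ell}2^{-k_i}$, with the slacks $\varepsilon_i>0$ chosen small enough that, by the triangle inequality, $\rho_x(\sigma)>0$ forces $\sigma$ to be a length-$\ell$ prefix of some genuine representative of $x$. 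This is precisely the ``probabilistic projection'' device from the proof of Theorem \ref{theorem.retract}, and the inequality bookkeeping is routine. Two features are built in: (a) for every $x$, every prefix of every sufficiently fast representative of $x$ (say with $d(x,x_{k_i})\le2^{-(i+3)}$) receives positive weight, so $\{\sigma:\rho_x(\sigma)>0\}$ contains sequences of every length; (b) $\rho_x(\sigma)$ is continuous in $x$ and internally computable uniformly in $\sigma$, and $\sum_\sigma\rho_x(\sigma)<\infty$ with a computable bound independent of $x$.

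\emph{The approximations and the modified limit.} Call $\sigma$ \emph{$m$-good} if $|\hat F(\sigma)|\ge m+3$, a decidable condition not involving $x$. Put $Z_m(x)=\sum_{j}\rho_x(\sigma_j)\cdot[\sigma_j\text{ is }m\text{-good}]$; applying (a) to a fast representative $\gamma$ of $x$ together with $|\hat F(\gamma(0),\dots,\gamma(\ell-1))|\to\infty$, one gets $Z_m(x)>0$, and by (b) $Z_m$ is a positive, continuous, internally computable function of $x$, hence (Remark \ref{mod}) internally invertible. Now set
$$p_{m,x}(j)=\frac{2\,\rho_x(\sigma_j)\cdot[\sigma_j\text{ is }m\text{-good}]}{Z_m(x)},\qquad w_m(x)=\bar\alpha\bigl(\,j\mapsto p_{m,x}(j)\,,\ j\mapsto y_{\mathrm{ind}(\hat F(\sigma_j))}\,\bigr).$$
Since $\sum_j p_{m,x}(j)=2>1$ the value $w_m(x)$ is defined, and it is internally computable uniformly in $m$ as $\bar\alpha$ applied to internally computable arguments. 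By the definition of $\bar\alpha$, $w_m(x)$ is a convex combination of vectors $y_{\mathrm{ind}(\hat F(\sigma_j))}$ ranging over indices $j$ with $p_{m,x}(j)>0$; for each such $j$, $\rho_x(\sigma_j)>0$ yields a genuine representative $\gamma$ of $x$ with $\sigma_j=(\gamma(0),\dots,\gamma(\ell-1))$, whence $\hat F(\sigma_j)\preceq F(\gamma)$ and, as $|\hat F(\sigma_j)|\ge m+3$, $y_{\mathrm{ind}(\hat F(\sigma_j))}=y_{F(\gamma)(s)}$ for some $s\ge m+2$, so $\|y_{\mathrm{ind}(\hat F(\sigma_j))}-f(x)\|\le2^{-(m+1)}$. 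Averaging gives $\|w_m(x)-f(x)\|\le2^{-(m+1)}$, hence $\|w_m(x)-w_{m+1}(x)\|<2^{-m}$ for all $m$. By the second clause of the lemma on modified limits the modified sequence of $\{w_m(x)\}_{m\in\N}$ is $\{w_m(x)\}_{m\in\N}$ itself, so its modified limit equals $\lim_m w_m(x)=f(x)$. As $m\mapsto w_m(x)$ is internally computable and the modified-limit operator is one of the generators, $f$ is internally computable.

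\emph{Main obstacle.} The crux is the design of $\rho_x$ in the second paragraph: arranging that positive weight \emph{certifies} ``$\sigma$ extends to a representative of $x$ itself'' — not merely to a representative of some nearby point — while keeping $\rho_x$ continuous in $x$, summable, and internally computable. This is what forces every vector entering the convex combination to be $2^{-(m+1)}$-close to $f(x)$ with no appeal to a modulus of continuity of $f$, and once it is in place the fast convergence of $\{w_m(x)\}_{m\in\N}$, and hence the applicability of the modified limit, comes for free. The remaining points — the exact choice of the slacks $\varepsilon_i$, the availability of the dyadic constants $2^{-n}$ and of division by a positive real (Remark \ref{mod}), and the evaluation of the convergent sum $Z_m(x)$ to prescribed precision — are the usual routine bookkeeping.
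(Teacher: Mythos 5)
Your proposal is correct and follows essentially the same route as the paper: an internally computable, continuous weighting of finite sequences whose positivity certifies that the sequence extends to a genuine representative of $x$ itself (the paper's $\Phi_\sigma$, your $\rho_x$), fed into $\bar\alpha$ together with the ``last output of $\hat F$'' map to produce, for each precision level, a convex combination of points all within $2^{-(m+1)}$ of $f(x)$, finished off with the modified limit. The only real divergence is that you normalize the weights by an internally computed sum $Z_m(x)$ (hence need summability and division), whereas the paper skips normalization by arranging that at least two sequences receive full weight $1$, so the unnormalized weights already sum past $1$ and $\bar\alpha$ applies directly.
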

\proof
We will denote the distance, norm and the algebraic functions on $M$, $\R$ and $\N$ without indices, since which we use will always be clear from the context.

Recall that $(Sec,\prec)$ is the set of (sequence numbers of) finite sequences of integers with the standard initial segment ordering.

Let $\hat F:Sec \rightarrow Sec$ be computable and monotone and let $F$ be the partial (in the Turing sense) computable function from $\N^\N \rightarrow \N^\N$ defined by
$$F(\gamma) = \lim_{n \rinf}\hat F((\gamma(0), \ldots , \gamma(n-1)))$$ such that whenever $\gamma:\N \rightarrow \N$ is total and $\{x_{\gamma(n)}\}_{n \in \N}$ is a fast converging sequence with limit $x \in X$, then $\xi = F(\gamma)$ is total and $\{y_{\xi(m)}\}_{m \in \N}$ is a fast converging sequence with limit $f(x) \in M$.

Uniformly in $k$ we will construct $f_k$ as an internally computable function in such a way that $f(x) = \lim_{k \rinf}f_k(x)$ and the sequence is fast converging. We then compose this sequence with the modified limit operator.

Let $k \geq 0$ be fixed, and let
$$\Delta_k = \{\sigma \in Seq \mid lh(\hat F(\sigma)) \geq k\}.$$
For each $\sigma = (s_0 , \ldots , s_{n-1}) \in \Delta_k$, $i < n$ and
$x \in X$ we let 
\[\phi_{\sigma,i}(x) = 
\begin{cases}
 1&\mbox{if $d(x,s_i) \leq 2^{-(i + 3)}$,}\\
 0&\mbox{if $d(x,s_i) \geq 2^{-(i+2)}$, and}\\
 \lambda&\mbox{if $0 \leq \lambda \leq 1$ and $d(x,s_i) =
   2^{-(i+3)}(1+\lambda)$.}
\end{cases}
\]
and we let $$\Phi_{\sigma}(x) = \prod_{i < lh(\sigma)}\phi_{\sigma,i}.$$\medskip

\noindent{\em Claim 1}\newline
If $\Phi_\sigma (x) > 0$ and $lh(\sigma) = n$, then there is a continuation of the sequence $$x_{\sigma(0)}, \ldots ,x_{\sigma(n-1)}$$ to a fast converging sequence with $x$ as the limit.

The proof is trivial, and is left for the reader.
\medskip

\noindent{\em Claim 2}\newline
For every $x \in X$ there is a $\sigma \in \Delta_k$ with $\Phi_\sigma(x) = 1$.
\medskip

\noindent{\em Proof of Claim 2}\newline
Choose $\gamma:\N \rightarrow \N$ such that
$$d(x,\gamma(i)) \leq 2^{-(i+3)}$$ for every $i \in \N$.
Then $F(\gamma)$ is total, so for some $n$, $\hat F((\gamma(0), \ldots , \gamma(n-1)))$ will have length $\geq k$ and  $\sigma = (\gamma(0), \ldots , \gamma(n-1)) \in \Delta_k$.
By the  construction of $\gamma$ we have that $\Phi_\sigma(x) = 1$.

Remark: From the argument, we see that there will be more than one such $\sigma$.

Now we enumerate $\Delta_k$ in a computable way as $$\Delta_k = \{\sigma_n\mid n \in \N\}.$$
Let 
\begin{enumerate}[$-$]
\item $h_x(n) = \Phi_{\sigma_n}(x)$;
\item $\hat F(\sigma_n) = \tau_n = (t_{n,0}, \ldots , t_{n,m_n - 1})  $ and let $g(n) = y_{t_{n,m_n-1}}$;
\item $f_k(x) = \bar \alpha(h_x,g)$.
\end{enumerate}
From the remark in the proof of Claim 2 we see that $\bar \alpha(h_x,g)$ will take a value.

By definition, $\bar \alpha (h_x,g)$ will be a sum
$$\lambda_0\cdot y_{g(0)} +\cdots + \lambda_m \cdot y_{g(m)}$$ for some $m$ where $$\sum_{i = 0}^m \lambda_i = 1$$ and $\lambda_i \leq h_x(i)$ for all $i$ (We have equality for all but at most one $i$, namely $i = m$.)
If $\lambda_i > 0$, we have that $h_x(i) > 0$, i.e. $\Phi_{\sigma_i}(x) > 0$.

By Claim 1 and the properties of $\hat F$, $\hat F(\sigma_i) = \tau_i$ can be extended to a fast converging sequence with $f(x)$ as a limit. It follows that the distance from $f(x)$ to $g(i) = y_{t_{i,m_i - 1}}$ is bounded by $2^{-k}$ (since the length of $\tau_i$ is at least $k$).

But then, by a direct application of the triangle inequality in $M$ we have that $$||f(x) - f_k(x)|| \leq 2^{-k}.\eqno{\qEd}$$
                            
\section{Epilogue}
In the previous section we showed that an externally computable function from a computable metric space $X$ to a computable Banach space $M$ in a reasonable sense is internally computable. Let us consider externally computable functions from $X$ to another computable metric space $Y$ instead. We will analyze to what extent the proof of Theorem \ref{3.16} can be adjusted to this more general case and isolate externally computable operators that will generate any externally computable $f$. 
\begin{defi} Let $Y$ be a computable metric space, and let $\{y_n \mid n \in \N\}$ be the dense subset with a computable metric defining $Y$ as a computable space.
\begin{enumerate}[a)]
\item A {\em probability distribution} on $\N$ is a map $\mu:\N \rightarrow \R_{\geq 0}$ such that 
$$\sum_{n \in \N}\mu(n) = 1.$$
\item A sequence $\{\mu_k\}_{k \in \N}$ of probability distributions on $\N$ is {\em fast converging} in $Y$ if, whenever $\gamma:\N \rightarrow \N$ is such that $$\forall k \in \N(\mu_k(\gamma (k)) > 0)$$ then $\{y_{\gamma (k)}\}_{k \in \N}$ is fast converging in $Y$.
\end{enumerate}\end{defi}
\begin{obs}{\em If $\{\mu_k\}_{k \in \N}$ is fast  converging in $Y$,  the limit of the sequence $\{y_{\gamma (k)}\}_{k \in \N}$ is independent of $\gamma$ as long as $\mu_k(\gamma(k)) > 0$ for all $k$. Moreover, this limit is externally computable from the sequence $\{\mu_{k}\}_{k \in \N}$.}\end{obs}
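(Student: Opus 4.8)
The plan is to handle the two assertions in turn: first that the limit does not depend on the chosen selector $\gamma$, so that ``the limit'' is meaningful, and then that it is externally computable from $\{\mu_k\}_{k \in \N}$.

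For the independence, suppose $\gamma$ and $\gamma'$ both satisfy $\mu_k(\gamma(k)) > 0$ and $\mu_k(\gamma'(k)) > 0$ for all $k$. I would form the interleaved selector $\delta$ with $\delta(k) = \gamma(k)$ for even $k$ and $\delta(k) = \gamma'(k)$ for odd $k$. At every index $\mu_k(\delta(k)) > 0$, so by the fast-convergence hypothesis $\{y_{\delta(k)}\}_{k \in \N}$ is fast converging, hence converges to some $z \in Y$. Its subsequence along even indices is $\{y_{\gamma(2j)}\}_{j \in \N}$, which therefore converges to $z$; but that is also a subsequence of the convergent sequence $\{y_{\gamma(k)}\}_{k \in \N}$, so $\lim_k y_{\gamma(k)} = z$. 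The same argument along odd indices gives $\lim_k y_{\gamma'(k)} = z$. Thus the limit depends only on $\{\mu_k\}_{k \in \N}$; denote it $z$.

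For the computability, I would use the fact that a representative of a point of $Y$ is exactly a function $\gamma : \N \to \N$ with $d(y_{\gamma(k)}, y_{\gamma(k+1)}) \leq 2^{-k}$ for all $k$, and that the fast-convergence hypothesis says precisely that any selector $\gamma$ with $\mu_k(\gamma(k)) > 0$ for all $k$ is such a function, necessarily with limit $z$. So it suffices to compute one such selector from a representative of $\{\mu_k\}_{k \in \N}$, i.e.\ from uniform access to fast-converging rational approximations $q_{n,j}$ of $\mu_k(n)$ with $|q_{n,j} - \mu_k(n)| \leq 2^{-j}$. Given $k$, run a dovetailed search over pairs $(n,j)$ and accept the first one with $q_{n,j} > 2^{-j}$. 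Acceptance guarantees $\mu_k(n) \geq q_{n,j} - 2^{-j} > 0$, and such a pair is always found, since $\sum_n \mu_k(n) = 1$ forces some atom $\mu_k(n) > 0$, for which $q_{n,j} \geq \mu_k(n) - 2^{-j} > 2^{-j}$ once $2^{-j} < \mu_k(n)/2$. Output $\gamma(k) = n$. The resulting $\gamma$ is a selector, hence a representative of $z$, and the procedure is computable in $\{\mu_k\}_{k \in \N}$.

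The only real obstacle is the termination of that inner search: it rests on $\mu_k$ being a genuine probability distribution (total mass $1$, hence at least one atom of positive mass) together with the semidecidability of strict positivity of a real presented by fast-converging rationals. I expect the rest to be routine; in particular one never needs to locate the largest atom or compute the support of $\mu_k$, only to exhibit some point of positive mass, which is all a selector has to satisfy.
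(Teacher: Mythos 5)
Your proof is correct. The paper states this as an unproved observation, and your argument is exactly the one it leaves implicit: the interleaving trick reduces independence of the limit to the fact that a fast-converging (hence Cauchy) sequence in the complete space $Y$ has a unique limit shared by all its subsequences, and the computability claim reduces to effectively selecting, for each $k$, some $n$ with $\mu_k(n)>0$ by a dovetailed search that terminates because each $\mu_k$ has total mass $1$ and positivity of a real given by fast-converging rational approximations is semidecidable. The only point you rightly flag --- that one must fix a representation of $\{\mu_k\}_{k\in\N}$ giving uniform access to approximations of $\mu_k(n)$ --- is resolved by the standard choice you make, and the resulting selector $\gamma$ is by definition a representative of the limit, so nothing is missing.
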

There does not seem to be an analogue of the modified limit operator in this general  setting, so we will have to content ourselves with the operator that gives the limit of any sequence of probability distributions that is fast converging in $Y$.
\newline
We will now consider an analogue $\alpha^\ast$ of the accumulation operator $\bar \alpha$:
\begin{defi} Let $f:\N \rightarrow \R_{\geq 0}$ and let $g:\N \rightarrow \N$.
\newline
We let $\alpha^\ast(f,g)$ be defined if there is a minimal $n$ such that $$\sum_{i \leq n}f(i) > 1$$  and then $\alpha^\ast(f,g)$ is the following probability distribution on $\N$:
$$\alpha^\ast(f,g)(m) = \sum\{f(i) \mid {i < n \wedge g(i) = m}\} +\lambda_m$$ where $\lambda_m = 1 - \sum_{i < n}f(i)$ if $g(n) = m$ and $\lambda_m = 0$ if $g(m) \neq m$.
\end{defi}
\begin{obs}{\em $\alpha^\ast$ is externally computable.}\end{obs}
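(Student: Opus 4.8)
The plan is to mimic the proof of Lemma \ref{alpha.comp} almost verbatim, since $\alpha^\ast$ is precisely the analogue of $\bar\alpha$ in which the Banach-space sum $\sum_i f(i)g(i)$ is replaced by the pushforward, under $g$, of the truncated weights $f(i)$. Fix external representatives $\hat f$ and $\hat g$ for $f$ and $g$, to be used as oracles. Since the output $\mu = \alpha^\ast(f,g)$ is again a map $\N \to \R_{\geq 0}$, to describe a representative of it externally it suffices to give, uniformly in $m \in \N$ and $k \in \N$, an algorithm that returns a rational within $2^{-k}$ of $\mu(m)$. Write $S_j = \sum_{i=0}^j f(i)$, with $S_{-1} = 0$, and let $n$ be the minimal index with $S_n > 1$; since $\alpha^\ast(f,g)\!\!\downarrow$ we have $\sum_i f(i) > 1$, so $n$ exists. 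Using $\hat f$ we first find some $\hat n$ with $S_{\hat n} > 1$, and by monotonicity of the partial sums we then know $n \leq \hat n$.

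The single obstruction is exactly the one in Lemma \ref{alpha.comp}: we cannot in general decide $n$ from $\hat f$, because the boundary case $S_{n-1} = 1$ cannot be separated computably from $S_{n-1} < 1$. As there, I would run two algorithms in parallel and return the answer of whichever halts first. Algorithm~1 treats the generic case $S_{n-1} < 1$: it searches for some $n' \leq \hat n$ together with verifications of the two strict inequalities $S_{n'-1} < 1$ and $S_{n'} > 1$; by monotonicity these force $n' = n$, whereupon it evaluates $\mu(m) = \sum_{i < n,\, g(i)=m} f(i) + (1 - S_{n-1})\,[g(n)=m]$ to precision $2^{-k}$ directly from $\hat f$ and $\hat g$. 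Algorithm~2 treats the boundary case $S_{n-1} = 1$, where the leftover mass $\lambda = 1 - S_{n-1}$ vanishes: mirroring conditions (i)--(iv) of Lemma \ref{alpha.comp} but with tolerance $2^{-(k+2)}$, it searches for $m_1 < n_1 \leq \hat n$ with ($m_1 = 0$ or $S_{m_1-1} < 1$), $S_{m_1} \in (1 - 2^{-(k+2)}, 1 + 2^{-(k+2)})$, the mass strictly between $m_1$ and $n_1$ below $2^{-(k+2)}$, and $S_{n_1} > 1$; it then returns $\sum_{i \leq m_1,\, g(i)=m} f(i)$.

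Three bookkeeping points remain, and here the present argument is in fact \emph{easier} than Lemma \ref{alpha.comp}. For termination, in the generic case Algorithm~1 eventually confirms $n' = n$, while in the boundary case Algorithm~1 can never succeed (for every $n'$ one of $S_{n'-1} < 1$, $S_{n'} > 1$ fails, by monotonicity and $S_{n-1}=1$), and there the witnesses required by Algorithm~2 exist and satisfy the inequalities strictly, so it halts. For correctness, condition (i) forces $m_1 \leq n$ and condition $S_{n_1} > 1$ forces $n \leq n_1$, so $n$ lies in $[m_1, n_1]$; the mass on indices strictly between $m_1$ and $n$ is below $2^{-(k+2)}$, and the clipping defect ($\lambda$, or $S_n - 1$ in the subcase $m_1 = n$) is likewise below $2^{-(k+2)}$ by the tolerance on $S_{m_1}$, so the exact value of $\sum_{i \leq m_1,\, g(i)=m} f(i)$ differs from $\mu(m)$ by less than $2^{-(k+1)}$; evaluating it to precision $2^{-(k+2)}$ then yields $\mu(m)$ within $2^{-k}$. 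The decisive simplification is that no external norm bound $N$ is needed: in Lemma \ref{alpha.comp} a cut-off error had to be weighed against the possibly large $\|g(i)\|$, whereas here the coordinate $\mu(m)$ is a sum of the weights $f(i) \in \R_{\geq 0}$ themselves, of total mass $1$, so a cut-off error $\delta$ costs at most $\delta$ in $\mu(m)$. This is exactly why the tolerances above are plain $2^{-(k+2)}$ rather than $2^{-(k+3)}/N$. The only step demanding any care is the error accounting for Algorithm~2, and it is routine.
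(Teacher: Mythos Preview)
Your proposal is correct and takes essentially the same approach as the paper: the paper's own argument is nothing more than the one-line remark that ``the argument is similar to the proof of the same result for $\bar\alpha$,'' and you have carried out precisely that adaptation of Lemma~\ref{alpha.comp}. Your observation that the norm bound $N$ becomes unnecessary here (since the output coordinates are themselves sub-probability masses) is the only point where the transcription is not verbatim, and it is handled correctly.
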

The argument is similar to the proof of the same result for $\bar \alpha$.
%\vspace{2mm}\newline

\noindent We may now follow the proof of Theorem \ref{3.16} and replace the use of $\bar \alpha$ with a use of $\alpha^\ast$. We then obtain, uniformly in $k\in \N$ and $x \in X$, a fast converging sequence $\mu_{k,x}$ of probability distributions with $f(y)$ as the limit. Thus, adding $\alpha^\ast$ and the limit operator for fast converging probability distributions to our toolbox of computable functions and operators, is sufficient to define any computable function between computable metric spaces.
\begin{rem} We consider $\alpha^\ast$ to be too ad hoc to be of any independent interest. It would be interesting to see if there are alternative operators on metric spaces that are more natural from the point of view of analysis, and that still generate all externally computable total functions.\end{rem}

\subsection*{Acknowledgement} I am grateful to E. M. Alfsen for showing me the construction in \cite{AE}, and to C. Rosendal for making me aware of the results in Kaufman \cite{Kauf}.

\end{document}